\renewcommand{\labelenumi}{(\roman{enumi})}
\newcounter{vremennyj}
\newcommand\cond[1]{\setcounter{vremennyj}{\theenumi}\setcounter{enumi}{#1}\labelenumi\setcounter{enumi}{\thevremennyj}}
\newcommand{\R}{{\mathbb  R}}
\newcommand{\s}{{\mathbb  S}}
\newcommand{\Z}{{\mathbb  Z}}
\newcommand{\N}{{\mathbb  N}}
\newcommand{\C}{{\mathbb  C}}
\newcommand{\cS}{\mathcal{S}}
\newcommand{\fdot}{\,\cdot\,}
\newcommand{\wt}{\widetilde}
\newcommand{\1}{\mathbf{1}}
\newcommand{\cM}{\mathcal{M}}
\newcommand{\cB}{\mathcal{B}}
\newcommand{\f}{\varphi}
\newcommand{\e}{\varepsilon}
\newcommand{\cz}{Cald\-er\-\'{o}n--Zygmund\ }
\DeclareMathOperator{\supp}{supp}
\DeclareMathOperator{\dist}{dist}
\DeclareMathOperator{\diam}{diam}
\newcommand{\bj}{\mathbf{j}}
\newcommand{\La}{\langle}
\newcommand{\Ra}{\rangle}
\newcommand{\ci}[1]{_{ {}_{\scriptstyle #1}}}
\newcommand{\ti}[1]{_{\scriptstyle \text{\rm #1}}}
\chardef\mathlig@atcode\count255
\def\actively#1#2{\begingroup\uccode`\~=`#2\relax\uppercase{\endgroup#1~}}
\def\mathlig@gobble{\afterassignment\mathlig@next@cmd\let\mathlig@next= }
\def\mathlig@delim{\mathlig@delim}
\def\mathlig@defcs#1{\expandafter\def\csname#1\endcsname}
\def\mathlig@let@cs#1#2{\expandafter\let\expandafter#1\csname#2\endcsname}
\def\mathlig@appendcs#1#2{\expandafter\edef\csname#1\endcsname{\csname#1\endcsname#2}}
\def\mathlig#1#2{\mathlig@checklig#1\mathlig@end\mathlig@defcs{mathlig@back@#1}{#2}\ignorespaces}
\def\mathlig@checklig#1#2\mathlig@end{%
\expandafter\ifx\csname mathlig@forw@#1\endcsname\relax
\expandafter\mathchardef\csname mathlig@back@#1\endcsname=\mathcode`#1%
\mathcode`#1"8000\actively\def#1{\csname mathlig@look@#1\endcsname}%
\mathlig@dolig#1\mathlig@delim
\fi
\mathlig@checksuffix#1#2\mathlig@end
}
\def\mathlig@checksuffix#1#2\mathlig@end{%
\ifx\mathlig@delim#2\mathlig@delim\relax\else\mathlig@checksuffix@{#1}#2\mathlig@end\fi
}
\def\mathlig@checksuffix@#1#2#3\mathlig@end{%
\expandafter\ifx\csname mathlig@forw@#1#2\endcsname\relax\mathlig@dosuffix{#1}{#2}\fi
\mathlig@checksuffix{#1#2}#3\mathlig@end
}
\def\mathlig@dosuffix#1#2{%
\mathlig@appendcs{mathlig@toks@#1}{#2}%
\mathlig@dolig{#1}{#2}\mathlig@delim
}
\def\mathlig@dolig#1#2\mathlig@delim{%
\mathlig@defcs{mathlig@look@#1#2}{%
\mathlig@let@cs\mathlig@next{mathlig@forw@#1#2}\futurelet\mathlig@next@tok\mathlig@next}%
\mathlig@defcs{mathlig@forw@#1#2}{%
  \mathlig@let@cs\mathlig@next{mathlig@back@#1#2}%
  \mathlig@let@cs\checker{mathlig@chck@#1#2}%
  \mathlig@let@cs\mathligtoks{mathlig@toks@#1#2}%
  \expandafter\ifx\expandafter\mathlig@delim\mathligtoks\mathlig@delim\relax\else
  \expandafter\checker\mathligtoks\mathlig@delim\fi
  \mathlig@next
}%
\mathlig@defcs{mathlig@toks@#1#2}{}%
\mathlig@defcs{mathlig@chck@#1#2}##1##2\mathlig@delim{%
  \ifx\mathlig@next@tok##1%
   \mathlig@let@cs\mathlig@next@cmd{mathlig@look@#1#2##1}\let\mathlig@next\mathlig@gobble
  \fi
  \ifx\mathlig@delim##2\mathlig@delim\relax\else
   \csname mathlig@chck@#1#2\endcsname##2\mathlig@delim
  \fi
}%
%
\ifx\mathlig@delim#2\mathlig@delim\else
  \mathlig@defcs{mathlig@back@#1#2}{\csname mathlig@back@#1\endcsname #2}%
\fi
}%
\mathchardef\ordinarycolon\mathcode`\:
\def\vcentcolon{\mathrel{\mathop\ordinarycolon}}
\numberwithin{equation}{section}
\theoremstyle{plain}
\newtheorem{theo}{Theorem}[section]
\newtheorem{cor}[theo]{Corollary}
\newtheorem{lem}[theo]{Lemma}
\newtheorem{prop}[theo]{Proposition}
\theoremstyle{definition}
\newtheorem{defn}[theo]{Definition}
\newtheorem*{defn*}{Definition}
\theoremstyle{remark}
\newtheorem{ex}[theo]{Example}
\newtheorem*{ex*}{Example}
\newtheorem{rem}[theo]{Remark}
\newtheorem*{exs*}{Examples}
\newtheorem*{rems*}{Remarks}
\theoremstyle{remark}
\newtheorem*{rem*}{Remark}
\title[Regularizations of singular integral operators]{Regularizations of  general singular integral operators}
\author{Constanze~Liaw}
\address{Department of Mathematics, Texas A\&M University, Mailstop 3368,      
College Station, TX  77843, USA }
\email{conni@math.tamu.edu}
\urladdr{http://www.math.tamu.edu/\~{}conni}
\author{Sergei~Treil}
\address{Department of Mathematics, Brown University, 151 Thayer
Str./Box 1917,      
Providence, RI  02912, USA }
\email{treil@math.brown.edu}
\urladdr{http://www.math.brown.edu/\~{}treil}
\begin{document}

\begin{abstract}
In the theory of singular integral operators significant effort is often required to rigorously define  such an operator. This is due to the fact that the kernels of such operators are not locally integrable on the diagonal $s=t$, so the integral formally defining the operator $T$ or its bilinear form $\La Tf, g \Ra$ is not well defined (the integrand in not in $L^1$) even for ``nice'' $f$ and $g$.  

However, since the kernel only has singularities on the ``diagonal'' $s=t$, the bilinear form $\La Tf, g \Ra$ is well defined say for bounded compactly supported functions with separated supports. 

One of the standard ways to interpret the boundedness of a singular integral operators is to consider the \emph{regularized kernel} $K_\e(s, t) = K(s, t) m((s-t)/\e) $, where the cut-off function $m$ is $0$ in a neighborhood of the origin, so the integral operators $T_\e$ with kernel $K_\e$ are well defined (at least on a dense set). Then instead of asking about the boundedness of the operator $T$, which is not well defined, one can ask about uniform boundedness (in $\e$) of the regularized operators $T_\e$. 

For the standard regularizations one usually considers \emph{truncated} operators $T_\e$ with $m(s) =\1\ci{[1, \infty)} (|s|)$, although smooth cut-off functions were also considered in the literature. 

The main result of the paper is that for a wide class of singular integral operators (including the classical \cz operators in non-homogeneous two weight settings), the so-called restricted $L^p$ boundedness, i.e.~the uniform estimate 
\[
|\La Tf, g \Ra| \le C \|f\|_p \|g\|_{p'}
\]
for bounded compactly supported $f$ ans $g$ with separated supports implies the uniform $L^p$-boundedness of regularized operators $T_\e$ for any reasonable choice of a smooth cut-off function $m$.   For example, any $m\in C^\infty(\R^N)$, $m\equiv 0$ in a neighborhood of $0$, and such that $1-m$ is compactly supported would work. 

If the kernel $K$ satisfies some additional assumptions (which are satisfied for classical singular integral operators like Hilbert Transform, Cauchy Transform, Ahlfors--Beurling Transform, Generalized Riesz Transforms), then the restricted $L^p$ boundedness also implies the uniform $L^p$ boundedness of the classical truncated operators $T_\e$ ($m(s) =\1\ci{[1, \infty)} (|s|)$). 
\end{abstract}

\maketitle

\section{Introduction}\label{CR}


\subsection{Preliminaries}
\label{s.prelim}
Generally,  a singular integral operator is understood as an operator $T$ on $L^2(\mu)$ (or on $L^p(\mu)$) that is \emph{formally} given by
\begin{equation}
\label{sio-1}
Tf(s)=\int K(s,t) f(t)\, d\mu(t),
\end{equation}
where the kernel $K(s,t)$ is singular near $s=t$, i.e.~$K(s,\fdot)$ and $K(\fdot,t)$ are not in $L^1\ti{loc}$ near the singularity. This means that the above integral is not defined even for the simplest functions $f$ (which explains the word \emph{formally} above), and the question of how to interpret this expression immediately arises.

In  simple cases the interpretation is quite easy. For example, if $T$ is the classical Hilbert Transform on the real line ($K(s, t) = \pi^{-1}(s-t)^{-1}$, $\mu$ is the Lebesgue measure on $\R$), it is an easy exercise to show that for a compactly supported smooth function  $f$ the integral \eqref{sio-1} exists in the sense of principal value, i.e.~that
\begin{equation}
\label{pv-hilb}
\lim_{\alpha\to 0+} \int_{|s-t|>\alpha} \frac{f(t)}{s-t} dt
\end{equation}
exists (for all $s$, provided that $f$ is $C^1$ and compactly supported). Thus, the operator $T$ is well defined on a dense set, and if one proves $L^2$ (or $L^p$) bounds on this set, the operator extends by continuity to all $L^2$ (resp.~$L^p$). For the Hilbert transform the $L^p$ estimates are a classical and well known result, so the Hilbert transform is a well defined
bounded operator in $L^p$, $p\in (1, \infty)$.
\footnote{It is also known, although significantly harder to prove,  that the principal value \eqref{pv-hilb} exists a.e.~for all compactly supported $L^1$ functions $f$, which immediately implies the existence of the principal value for all $f\in L^p$, $p\in(1, \infty)$.}

Such a na\"\i ve approach also works for other ``nice'' classical singular integral operators, like Riesz Transforms in $\R^n$. However, the situation becomes more complicated, if one considers more general measures and/or kernels: the existence of principal values in such situations is far from trivial, and usually requires a lot of effort.

So, given a (formal)  singular integral operator, how can one define it and investigate whether it is bounded in $L^p$?  One of the standard approaches in the general situation is to consider truncated operators
$T_\e$,
\begin{equation}
\label{trunc-op}
T_\e f(s) = \int_{|s-t|>\e} K(s,t) f(t) d\mu(t)
\end{equation}
which are well defined for compactly supported $f$ if one assumes, for example, that $K$ is locally bounded off the ``diagonal'' $s=t$. In this case, the boundedness in $L^p$ is defined as the uniform boundedness of the truncated operators $T_\e$; if the operators $T_\e$ are uniformly bounded, one can then take a  limit point (in the weak operator topology) of $T_\e$, $\e\to 0$ and the corresponding singular integral operator $T$. Note that the weak limit point does not have to be unique.

Moreover, if the operators $T_\e$ are uniformly bounded, then it is often possible to prove the existence of principal values (at least for $f$ in a dense set), so one can define the singular integral operator in a natural fashion.

Instead of the truncations, one can also consider smooth regularizations of the kernel $K$. For example, for the Hilbert transform, it is very natural to move to the complex plane and consider operators $T_\e$,
\[
T_\e f(s) = \frac1\pi \int_{\R} \frac{f(t)}{s-t+i\e} dt .   
\]

Further, there is an alternative, ``axiomatic'' way  of defining a singular integral operator with kernel $K$, see for example \cite{christlectures}. Namely, we assume that we are given an operator, or, more precisely its bilinear form $\langle T f, g\rangle$, well defined on some smaller set (the Schwartz class or the class of $C^\infty$ functions with compact support are often used). The statement that $T$ is an integral operator with kernel $K$ means simply that
\[
\langle T f, g\rangle = \int K(s,t) f(t) g(s) d\mu(t) d\mu(s)
\]
for $f$ and $g$ with separated compact supports (so the above integral is well defined). In many cases it was shown that if the operator $T$ is bounded in $L^p$, then the truncated operators $T_\e$ are uniformly  bounded as well. Note, that in the above abstract approach we require some kind of \emph{apriori bounds} on the operator, because we assume that its bilinear form is well defined on some smaller space.

Let us also mention, that in the theory of \cz operators (see the definition of a \cz operator below), if the kernel $K$ is antisymmetric, $K(s, t) = -K(t, s)$, there is a canonical way to interpret the operator without any apriori boundedness assumptions, see for example \cite{christlectures} for the homogeneous case and  \cite{NTV} for the non-homogeneous case.

Namely, antisymmetry means that formally $T^*=-T$, so $\langle Tf , g\rangle = - \langle f , Tg\rangle$, so (again formally)
\begin{align}
\notag
\langle Tf , g\rangle & = \int K(s, t) f(t) g(s) d\mu(t) d\mu(s) = - \int K(s, t) f(s) g(t) d\mu(t) d\mu(s)
\\
\label{sym-czo}
& = \frac12  \int K(s, t) \left[ f(t) g(s) - f(s) g(t) \right] d\mu(t) d\mu(s).
\end{align}
If  $K$ is a \cz kernel of dimension $d$ in $\R^N$,  then by the definition $|K(s, t)|\le C |s-t|^{-d}$.  
Therefore if $f$ and $g$ are compactly supported $C^1$ functions, then the integrand in \eqref{sym-czo} can be estimated by $C|s-t|^{-d+1}$.

For such kernels it is usually assumed that the measure satisfies the condition $\mu(\{x:|x-x_0|<r\}) \le C r^d$ for all $x_0$ and $r$ (this condition is usually necessary for the $L^2$ boundedness of classical \cz operators of dimension $d$, like Cauchy transform in $\C$).  But for such measures $\int_Q |s-t|^{-d+1} d\mu(s)d\mu(t)<\infty$ for any compact $Q$, so $\langle Tf , g\rangle$ is well defined for  $f, g \in C^1_0$.  

\subsection{Description of the main results}
\label{ss:main-results} 
The main result of the paper can be stated in one sentence as ``the situation with interpretation of singular integral operators is much simpler, than it seems; to investigate the boundedness one only needs to study an elementary and well defined \emph{restricted} bilinear form''.

The main idea is embarrassingly simple, and we should be ashamed that we did not arrive to it much earlier, although some preliminary results in this direction were obtained by us in \cite{mypaper}, and   in thesis of the first author.  In our defense we can only say that this idea was overlooked by generations of harmonic analysts before us. 

Let us describe the main results in more detail. We will need some definitions.

\begin{defn}
Let Radon measures $\mu$ and $\nu$ in $\R^N$ be fixed. A singular kernel in $\R^N$ (with respect to the measures $\mu$ and $\nu$) is a $\mu\times\nu$-measurable function $K$ on $\R^N\times \R^N$, which is locally $L^2(\mu\times \nu)$ off the diagonal $\{(s, t)\in \R^N\times \R^N : s=t\}$. 

We say that the singular kernel $K$ is a singular kernel of order $d$ if the kernel $\wt K$
\[
\wt K(s, t) = \left\{
\begin{array}{ll}
K(s, t) |s-t|^d, \qquad & s\ne t \\
0   & s=t
\end{array} \right.
\]
is locally $L^2(\mu\times \nu)$.  Note, that for any singular kernel on $\R^N$ the kernel $\wt K$ is locally $L^2(\mu\times\nu)$ off the diagonal, so one only needs to check this condition on the diagonal. 
\end{defn}

For a singular kernel  $K$  in $\R^N$ (with respect to Radon measures $\mu$ and $\nu$) the expression 
\[
\langle Tf, g\rangle = \int K(s, t) f(t) g(s) d\mu(t) d\nu(s)
\]
is well defined for all Borel measurable bounded functions $f$, $g$ with separated compact supports ($\dist(\supp f, \supp g)>0$).

\begin{defn}
\label{d:r-bound}
Let $p\in (1, \infty)$ and Radon measures $\mu$ and $\nu$ on $\R^N$ be fixed. Let $K$ be a singular kernel in $\R^N$. 
We say that the formal  singular integral operator  with the kernel $K$ is restrictedly bounded in $L^p$ (i.e.~as an operator $T:L^p (\mu) \to L^p(\nu)$) if 
\begin{equation}
\label{restr-bound}
\left| \int K(s, t) f(t) g(s) d\mu(t) d\nu(s)  \right| \le C \|f\|_{L^p(\mu)} \|g\|_{L^{p'}(\nu)}, \qquad 1/p + 1/p' =1
\end{equation}
for all bounded $f$, $g$ with separated compact supports.

Sometimes,  abusing the language, we will just  say that the kernel $K$ is restrictedly $L^p$ bounded with bound $C$. 

The least constant $C$ in \eqref{restr-bound} (with $p$, $\mu$ and $\nu$ fixed) is called the restricted norm of $K$.
\end{defn}

It is easy to check that for any fixed $p$, $\mu$, $\nu$ the restricted norm is a (semi)norm on the set of singular kernels. 

\begin{rem*}
For fixed measures $\mu$ and $\nu$ one can always assume that the kernel $K$ is defined only a.e.~with respect to the measure $\mu\times \nu$. It is not hard to  show that if the measures $\mu$ and $\nu$ do not have  common atoms (which we usually assume), then the diagonal $\{(s, s):s\in\R^N\}$ of  $\R^N\times \R^N$ has $\mu\times\nu$ measure $0$, so the values of $K$ on the diagonal do not have to be specified. 
\end{rem*}

Our first main result is that 
there are many families of    smooth mollifying multipliers 
%
%
$M_\e(s, t)$,  such that
\begin{enumerate}
\item $M_\e (s, t)\to 1$ as $\e\to 0$ uniformly on all sets $\{s, t \in\R^N: |s-t| >a\}$, $a>0$; 
  \item For any singular kernel $K$ (with respect to Radon measures $\mu$ and $\nu$) the regularized kernels $K_\e = K M_\e$ are locally in $L^2(\mu\times\nu)$, so the corresponding operators are well defined for bounded compactly supported functions;
    \item If the kernel is restrictedly bounded in $L^p$ (i.e.~if the estimate \eqref{restr-bound} holds for all bounded $f$, $g$ with separated compact supports), and the measures $\mu$ and $\nu$ do not have common atoms, then the regularized integral operators $T_\e$ with kernels $K_\e$ are uniformly (in $\e$) bounded as operators $L^p(\mu) \to L^p(\nu)$. 
\end{enumerate}

In this case, one can take the limit point $T$ (in the weak operator topology) of $T_\e$, $\e\to 0+$ as a singular integral operator with the kernel $K$. It is easy to see that
\begin{equation}
\label{abstr-sio}
\langle Tf,g \rangle = \int K(s, t) f(t) g(s) d\mu(t) d\nu(s)
\end{equation}
for all bounded $f$ and $g$ with separated compact supports, so $T$ is indeed a singular integral operator with kernel $K$ in the sense of the abstract ``axiomatic'' approach, described above in Section \ref{s.prelim}.

Note, that such limit point does not need to be unique, but it is easy to show that the difference between any two bounded  operators $L^p(\mu) \to L^p(\nu)$ satisfying \eqref{abstr-sio} (with the same kernel $K$) is always a multiplication operator.  

A simple way to construct a mollifying multiplier  is to take an arbitrary $C^\infty$ function $m$, $m\equiv 0$ in a neighborhood of the origin, and such that $1-m$ has a compact support,  and define $M_\e(s, t):= m((t-s)/\e)$. If at the origin one only requires that $|m(x)| \le C |x|^d$, then the function $M_\e(s, t):= m((t-s)/\e)$ will regularize the kernels of order up to $d$. 

Next, we will show that, under additional assumptions  on the kernel $K$, the restricted boundedness of $K$ implies the uniform boundedness of the truncated operators \eqref{trunc-op}.  

And finally, we will show that under some additional assumption, the restricted boundedness implies a two weight Muckenhoupt condition,  see Theorem \ref{t:Muckehnoupt} below for the exact statement.

\section{Smooth regularizations of singular kernels}\label{sec2}

\subsection{A trivial idea}
We start with a simple observation that if a kernel $K(s, t)$, $s, t\in \R^n$ is restrictedly bounded in $L^p$ with a bound $C$, then for any $a\in \R^N$ the kernel
\[
K(s, t) e^{-a\cdot t} e^{a\cdot s}
\]
is also restrictedly bounded with the same constant. This follows from the trivial fact that a multiplication by a unimodular function is always an invertible isometry in all $L^p(\mu)$, and that it does not change the support.

Averaging $K(s, t) e^{-i a\cdot t} e^{ ia\cdot s}$ over all $a\in\R^N$ with weight $\rho$, $\rho\ge 0$, $\int_{\R^N} \rho(a) da =1$ we get that the averaged  kernel
\[
\int_{\R^N} \rho (a) K(s, t) e^{-i a\cdot t} e^{i a\cdot s} da
\]
is also restrictedly bounded with the same constant $C$. Note, that we do not even have to assume that $\rho\ge0$. It is sufficient to assume that $\rho\in L^1(dx)$ ($L^1$ with respect to Lebesgue measure); in this case the averaged kernel will be bounded with constant $C \|\rho\|_1$, where $\|\fdot\|_1$ is  the $L^1$ norm with respect to the Lebesgue measure.

One can immediately see that
\[
\int_{\R^N} \rho (a) K(s, t) e^{-i a\cdot t} e^{i a\cdot s} da = \widehat \rho(t-s) K(s,t)
\]
where $\widehat \rho$ denotes the Fourier Transform, $\widehat \rho (s) = \int_{\R^N} \rho(x) e^{-i s\cdot x} dx$.

We can summarize the above reasoning in the following lemma.

\begin{lem}
\label{l-reg1}
Let $K$ be a restrictedly $L^p$ bounded kernel (i.e.~estimate \eqref{restr-bound} holds for all bounded compactly supported functions with separated supports) with a bound $C$. Assume that  $\rho\in L^1(dx)$ and let
\[
M=1- \widehat\rho, \qquad M_\e(x) := M(x/\e).
\]
Then the kernels $K_\e( s, t):= K(s, t) M_\e(t-s)$ are $L^p$ restrictedly bounded with constant $(1+ \|\rho\|_1) C$.
\end{lem}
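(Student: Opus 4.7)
The plan is to formalize the averaging heuristic sketched in the paragraphs preceding the lemma, with two small wrinkles: an appropriate rescaling of $\rho$ and a Fubini justification.

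\textbf{Modulation invariance.} First I would note that for each fixed $a \in \R^N$ the modulated kernel $K^{(a)}(s,t) := K(s,t)\, e^{ia\cdot(s-t)}$ is restrictedly $L^p$-bounded with the same constant $C$. Indeed, if $f$, $g$ are bounded with separated compact supports, then $\widetilde f(t) := f(t) e^{-ia\cdot t}$ and $\widetilde g(s) := g(s) e^{ia\cdot s}$ have the same supports and $L^p(\mu)$, $L^{p'}(\nu)$ norms, and
\[
\int K^{(a)}(s,t) f(t) g(s)\, d\mu(t) d\nu(s) = \int K(s,t) \widetilde f(t) \widetilde g(s)\, d\mu(t) d\nu(s),
\]
so the bound follows from \eqref{restr-bound} applied to $\widetilde f$ and $\widetilde g$.

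\textbf{Rescaling and averaging.} Set $\rho_\e(a) := \e^N \rho(\e a)$; a change of variables yields $\widehat{\rho_\e}(\xi) = \widehat\rho(\xi/\e)$ and $\|\rho_\e\|_1 = \|\rho\|_1$. Hence $M_\e(t-s) = 1 - \widehat{\rho_\e}(t-s)$, so it suffices to control the single kernel $L_\e(s,t) := K(s,t) \widehat{\rho_\e}(t-s)$: the triangle inequality applied to $K_\e = K - L_\e$ then yields the announced bound $(1+\|\rho\|_1)C$. For $f$, $g$ bounded with separated compact supports I would use Fubini to write
\[
\int L_\e(s,t) f(t) g(s)\, d\mu(t) d\nu(s) = \int \rho_\e(a) \Big[ \int K^{(a)}(s,t) f(t) g(s)\, d\mu(t) d\nu(s) \Big] da,
\]
bound the inner bracket uniformly in $a$ by the previous paragraph, and pull the factor $\|\rho_\e\|_1 = \|\rho\|_1$ outside.

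\textbf{Main obstacle.} The only genuinely nontrivial point is the Fubini exchange: one needs the triple integrand $\rho_\e(a) K(s,t) f(t) g(s)$ to be absolutely integrable in $(s,t,a)$. Since $f$, $g$ are bounded and compactly supported with $\dist(\supp f, \supp g)>0$, the pair $(s,t)$ stays in a compact set away from the diagonal on which $K \in L^2(\mu\times\nu)$, while the $a$-integral contributes the finite factor $\|\rho_\e\|_1$; absolute integrability follows and the averaging identity is legitimate.
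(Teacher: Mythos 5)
Your proof is correct and follows essentially the same route as the paper: modulation invariance of the restricted bound, averaging against the rescaled weight $\e^N\rho(\e\fdot)$ (whose $L^1$ norm equals $\|\rho\|_1$), and the triangle inequality for $K_\e = K - L_\e$. The Fubini justification you add is a sound and welcome detail that the paper leaves implicit.
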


\begin{proof}
The estimate for $\e=1$ was already explained above. To proof the estimate for general $\e$ one just can notice that $\widehat\rho(s/\e)$ is the Fourier transform of the function $\e^{N} \rho(\e x)$ and
\[
\int_{\R^N} | \e^{N} \rho(\e x) | dx = \int_{\R^N} |  \rho( x) | dx  = \|\rho\|_1.
\]
\end{proof}

\begin{lem}
For the function $M$ (and $M_\e$) defined in the previous lemma, the following holds:
\begin{enumerate}
    \item For any $a>0$, the function $M_\e(s)\to 1$ as $\e\to0+$ uniformly on the set $\{s\in\R^N, |s|>a\}$.
    
    \item If $\widehat\rho\in C^1$ (for example, if $\int_{\R^N} (1+|x|)|\rho(x)| dx<\infty$) and if $\int_{\R^N} \rho(x) dx =1$, then for any $\e>0$
    \[
    M_\e (s) = O(|s|)\qquad \text{as } s\to 0.
    \]
    
    \item If  $\int_{\R^N} (1+|x|^2)|\rho(x)| dx<\infty$, and $\int_{\R^N} \rho(x) dx =1$, and
    \[
    \int_{\R^N} x_k \rho(x) dx =0, \qquad \forall k=1, 2, \ldots, N,
    \]
    then
    \[
    M_\e (s) = O(|s|^2)\qquad \text{as } s\to 0.
    \]
    
    \item Moreover, if $\int_{\R^N} \rho(x) dx =1$ and if $\int_{\R^N} (1+|x|^k)|\rho(x)| dx<\infty$ for some $k\in \N$, and  
    \[
    \int_{\R^N} x^\alpha \rho(x) dx =0,  
    \]
    for all multi-indices $\alpha$, $|\alpha|< k$, then
    \[
    M_\e (s) = O(|s|^{k})\qquad \text{as } s\to 0.
    \]
\end{enumerate}
\end{lem}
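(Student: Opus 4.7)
My plan is to reduce all four statements to two standard facts about $\widehat\rho$: its decay at infinity (Riemann--Lebesgue) and the vanishing order of its Taylor expansion at the origin (controlled by the moments of $\rho$). Since $M_\e(s) = 1 - \widehat\rho(s/\e)$, both regimes translate directly into the claimed behavior of $M_\e$ under the rescaling $s \mapsto s/\e$.

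For (i), I observe that $M_\e(s) - 1 = -\widehat\rho(s/\e)$. Because $\rho \in L^1(\R^N)$, the Riemann--Lebesgue lemma gives that $\widehat\rho$ is continuous and tends to $0$ at infinity, so $\eta(R) := \sup_{|y| \ge R} |\widehat\rho(y)| \to 0$ as $R \to \infty$. Fixing $a > 0$ and restricting to $|s| \ge a$ gives $|s/\e| \ge a/\e$, hence $\sup_{|s| \ge a} |M_\e(s) - 1| \le \eta(a/\e) \to 0$ as $\e \to 0+$, which is the claimed uniform convergence.

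For (ii)--(iv), I would treat them uniformly as a single Taylor-expansion argument parameterized by $k$. Under the hypothesis $\int (1+|x|^k)|\rho(x)|\, dx < \infty$, dominated convergence justifies differentiating under the integral sign up to order $k$, yielding $\widehat\rho \in C^k(\R^N)$ with
\[
\partial^\alpha \widehat\rho(\xi) = \int_{\R^N} (-ix)^\alpha \rho(x)\, e^{-i\xi\cdot x}\, dx, \qquad |\alpha| \le k.
\]
Evaluating at $\xi = 0$ and combining $\widehat\rho(0) = \int \rho = 1$ with the vanishing-moment hypothesis $\int x^\alpha \rho(x)\, dx = 0$ for $1 \le |\alpha| < k$, one obtains $M(0) = 0$ and $\partial^\alpha M(0) = 0$ for all $1 \le |\alpha| < k$. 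A standard Taylor expansion of $M$ about the origin, using the continuity of its derivatives of order $k$, then gives $M(y) = O(|y|^k)$ as $y \to 0$; whence $M_\e(s) = M(s/\e) = O(|s|^k)$ as $s \to 0$ for each fixed $\e > 0$. Clauses (ii), (iii) and (iv) are precisely the cases $k = 1$, $k = 2$ and general $k$. The only mildly technical point is the justification of differentiation under the integral, which is routine given the moment bounds stated in the hypotheses; no genuine obstacle arises, as the hypotheses in each clause have been calibrated exactly so that the relevant derivatives of $\widehat\rho$ exist and vanish at the origin.
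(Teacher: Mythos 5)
Your proof is correct and is exactly the argument the paper has in mind: the paper leaves this lemma as an exercise, noting only that (i) follows from the Riemann--Lebesgue lemma and the rest from ``basic properties of the Fourier transform,'' which is precisely your rescaling argument for (i) and your Taylor expansion of $\widehat\rho$ at the origin (with derivatives computed from the moments of $\rho$) for (ii)--(iv). Nothing further is needed.
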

\begin{proof}
The proof follows from the basic properties of the Fourier Transforms, we leave it as an exercise for a reader. Statement \cond1 follows, for example, from the Riemann--Lebesgue Lemma.
\end{proof}

\subsection{Some examples}

\begin{ex}
On the real line $\R$ consider the weight $\rho(x) = e^{-x} \1_{[0, \infty)}(x)$. For this weight $\widehat \rho(s) = (1+is)^{-1}$, so the mollifying factor $M(s)= 1- \widehat\rho(s)$ is given by
\[
M(s) =\frac{is}{1+is}= \frac{s}{s - i}, \qquad\text{so}\quad M_\e(s) := M(s/\e) = \frac{s}{s-i\e}
\]
For the Hilbert Transform kernel $K(s, t) = \pi^{-1} (s-t)^{-1}$ the regularization with this mollifying factor give as the kernel
\[
K_\e(s, t) = \frac1\pi\cdot \frac1{s-t} M_\e(t-s) = \frac1\pi \cdot \frac1{s-t} \cdot\frac{t-s}{t-s-i\e} = \frac1\pi\cdot \frac1{s-t +i\e}
\]  
which has the very natural complex analytic interpretation. That regularization is widely used in complex analysis, and our investigation of this regularization in \cite{mypaper} lead us to the main idea of this paper.
\end{ex}

\begin{ex}
Define the weight $\rho$ on $\R^N$ by $\rho(x) = (2\pi)^{-N/2} e^{-|x|^2/2}$. The Fourier Transform of $\rho$ is given by $\widehat\rho(s) = e^{-|s|^2/2}$, so the mollifying multiplier is $M(s) = 1- e^{-|s|^2/2}$. The regularized kernel $K_\e$ will be
\[
K_\e(s, t) := K(s, t) \left[1- e^{-|s-t|^2/2\e} \right] .
\]
Since $M$ has zero of order 2 at 0, this function will regularize singular kernels of order $d\le 2$ (i.e.~such that $|s-t|^d K(s, t)$ is locally bounded).  
\end{ex}

The mollifying multiplier $M$ regularizes only singular kernels of order $d\le 2$. To regularize kernels of higher order one can use, for example, its power $M_\e^k$, where the exponent $k$ is an integer, $k\ge N/2$. Applying Lemma \ref{l-reg1} $k$ times we get that given an $L^p$ restrictedly bounded kernel $K$ with a bound $C$, then $K(s,t) M_\e(t-s)^k$ is also restrictedly bounded with constant $2^k$.

However, we neither have to do that, nor do we need to be tied to a particular regularization. Using basic facts about Fourier Transform, we can construct mollifying multipliers without explicitly defining the weight and computing its Fourier Transform.

\subsection{Schur multipliers, Wiener algebra and Sobolev spaces}
Let us first introduce some definitions.  

\begin{defn}
Let $p\in (1, \infty)$ and Radon measures $\mu$ and $\nu$ in $\R^N$ be fixed. 

We say that a function $M$ on $\R^N \times \R^N$ is a \emph{Schur multiplier} on the set of  restrictedly $L^p$ bounded singular kernels, if
the map $K\mapsto KM$ is a bounded map on this set. In other words, $M$ is a Schur multiplier on this set if, there exists a constant $C_1$ such that  for any $L^p$ restrictedly bounded  kernel $K$ with a bound $C$, see Definition \ref{d:r-bound}, the kernel $MK$ is also restrictedly bounded  with constant $C_1 C$. 

The best possible constant $C_1$ (for a fixed $p$, $\mu$, and $\nu$) is called ($L^p$, $\mu$, $\nu$) restricted \emph{Schur norm} of $M$. 
\end{defn}

\begin{rem*}
Let us explain the terminology a little bit. The Schur product $A\circ B$ of two matrices is their entrywise  product, $(A\circ B)_{j,k} = a_{j,k}b_{j,k}$. 

Similarly, the Schur product of two kernels, $K(s, t)$ and $M(s, t)$ is their usual product (of two functions). The special term ``Schur product'' is sometimes used to distinguish it from the product (composition) of the corresponding integral operators. 

Let $X$ be a space of operators (like the space of bounded operators, or the Schatten--Von-Neumann class $\mathfrak S_p$). 
On the set of kernels one can introduce the norm inherited from the space $X$ of operators, the norm $\|K\|_X $ of a kernel is simply the norm of the corresponding integral operator in the space $X$. 

A function $M$ is called a Schur multiplier for the class $X$ if the map $K \mapsto MK$ is bounded with respect to the norm $\|K\|_X$.  

We use the same term ``Schur multiplier'', because our definition is very close in spirit to the classical one. 

We should also mention that while our definition definitely depends on $p$, $\mu$ and $\nu$, the Schur multipliers we construct below will be the \emph{universal} ones: they are Schur multipliers with uniform estimate on the Schur norm for all $p$, $\mu$, $\nu$, and also for all reasonable classes of operators like the bounded operators, $\mathfrak S_p$. 
\end{rem*}

Recall, that the Wiener algebra $W(\R^N)$ is the set of all $f =\widehat h$, $h\in L^1(\R^N)$, with the norm $\|f\|_W = \|h\|_1$. 

The reasoning in the beginning of this section can be summarized in the following lemma.

\begin{lem}
\label{l:wiener-schur-1}
Let $M\in W=W(\R^N)$. Then the function $\wt M$ on $\R^N\times \R^N$ defined by $\wt M(s, t) = M(t-s) $ is a Schur multiplier with Schur norm at most $\|M\|_W$. 
\end{lem}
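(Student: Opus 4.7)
The plan is to make rigorous the averaging argument sketched informally at the beginning of the section. I would write $M = \widehat h$ with $h \in L^1(\R^N)$ and $\|h\|_1 = \|M\|_W$, which, using the paper's Fourier convention $\widehat h(\xi) = \int h(x) e^{-i\xi\cdot x}\,dx$, gives the pointwise representation
\[
M(t-s) = \int_{\R^N} h(a)\, e^{-ia\cdot t}\, e^{ia\cdot s}\, da.
\]

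Next I would fix bounded Borel functions $f$ and $g$ with separated compact supports, plug this representation into the restricted bilinear form, and apply Fubini's theorem to write
\[
\int K(s,t)\, M(t-s)\, f(t)\, g(s)\, d\mu(t)\, d\nu(s) = \int_{\R^N} h(a)\, B_a(f,g)\, da,
\]
where
\[
B_a(f,g) := \int K(s,t)\,\bigl(e^{-ia\cdot t} f(t)\bigr)\bigl(e^{ia\cdot s} g(s)\bigr) d\mu(t)\, d\nu(s).
\]
The Fubini step uses three ingredients: $K$ is locally $L^2(\mu\times\nu)$ off the diagonal (from the definition of a singular kernel), the set $\supp f \times \supp g$ is compact and sits at positive distance from the diagonal, and $f,g$ are bounded, so $|Kfg| \in L^1(\mu\times\nu)$; combined with $h \in L^1$, this gives absolute integrability of the triple integrand in $(a,s,t)$.

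For every fixed $a$, the twisted functions $f_a(t) := e^{-ia\cdot t} f(t)$ and $g_a(s) := e^{ia\cdot s} g(s)$ are bounded, retain the same separated compact supports as $f$ and $g$, and satisfy $\|f_a\|_{L^p(\mu)} = \|f\|_{L^p(\mu)}$ and $\|g_a\|_{L^{p'}(\nu)} = \|g\|_{L^{p'}(\nu)}$, since multiplication by a unimodular function is an isometry on $L^p$. The restricted $L^p$-boundedness of $K$ with constant $C$ therefore yields $|B_a(f,g)| \le C \|f\|_{L^p(\mu)} \|g\|_{L^{p'}(\nu)}$ uniformly in $a$. Integrating out and using $\int_{\R^N} |h(a)|\,da = \|M\|_W$ gives
\[
\left| \int K(s,t)\, M(t-s)\, f(t)\, g(s)\, d\mu(t)\, d\nu(s) \right| \le C\,\|M\|_W\, \|f\|_{L^p(\mu)}\, \|g\|_{L^{p'}(\nu)},
\]
which is exactly the claimed Schur-norm bound.

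The only real obstacle is the Fubini interchange; it is routine here, but it is the step that explicitly uses the hypothesis that $K$ is locally $L^2$ off the diagonal. Without that hypothesis (or some similar local integrability assumption), the triple integrand would not obviously be absolutely integrable and one would have to justify the interchange by a limiting procedure, e.g.\ truncating $h$ to a compact set and passing to the limit in $L^1$.
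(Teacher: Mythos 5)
Your proof is correct and is essentially the paper's own argument: the lemma is stated as a summary of the averaging computation at the start of Section~\ref{sec2}, where the kernel is twisted by $e^{-ia\cdot t}e^{ia\cdot s}$ (an isometry preserving supports) and then averaged against $h\in L^1$ to produce the factor $\widehat h(t-s)=M(t-s)$. Your explicit justification of the Fubini interchange via the local $L^2$ (hence $L^1$) integrability of $K$ off the diagonal is a welcome bit of extra care that the paper leaves implicit.
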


The next trivial and well-known lemma gives a simple sufficient condition for $M\in W$. 
\begin{lem}
\label{l:sob-wiener}
Let $M$ belong to the Sobolev space $W^{k, 2}(\R^N) = H^k(\R^N)$ (all derivatives up to order $k$ are in $L^2$), $k>N/2$. Then $M$ belongs to the Wiener algebra $W(\R^N)$, and  $\|M\|_W \le C \|M\|_{W^{k,2}}$, where $C=C(N)$. 
\end{lem}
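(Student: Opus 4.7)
The plan is to show directly that the Fourier transform $\widehat M$ of $M$ lies in $L^1(\R^N)$: then by Fourier inversion $M$ equals the Fourier transform of an $L^1$ function (namely a constant multiple of $\widehat M$ reflected through the origin), which is exactly the condition for $M\in W(\R^N)$, and the bound on $\|\widehat M\|_1$ will translate into a bound on $\|M\|_W$.

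To estimate $\|\widehat M\|_1$ I would use the Plancherel-based characterization of the Sobolev space: $M\in H^k(\R^N)$ if and only if the weighted Fourier transform $(1+|\xi|^2)^{k/2}\widehat M(\xi)$ belongs to $L^2(\R^N)$, and this $L^2$ norm is equivalent to $\|M\|_{W^{k,2}}$ with constants depending only on $k$ and $N$. Inserting the factor $(1+|\xi|^2)^{k/2}(1+|\xi|^2)^{-k/2}=1$ under the integral and applying Cauchy--Schwarz gives
\[
\int_{\R^N}|\widehat M(\xi)|\,d\xi \;\le\; \Bigl(\int_{\R^N}(1+|\xi|^2)^{k}|\widehat M(\xi)|^2 d\xi\Bigr)^{1/2}\Bigl(\int_{\R^N}(1+|\xi|^2)^{-k}\,d\xi\Bigr)^{1/2}.
\]
The first factor is bounded by $C_1(k)\|M\|_{W^{k,2}}$. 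The second factor, by passing to polar coordinates, reduces to a finite constant $C_2(N,k)$ exactly when $2k>N$, which is our hypothesis $k>N/2$.

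Multiplying these estimates yields $\|\widehat M\|_1\le C(N,k)\|M\|_{W^{k,2}}$, and hence $M\in W(\R^N)$ with $\|M\|_W \le C(N)\|M\|_{W^{k,2}}$ (absorbing the $k$-dependence into $C$, since $k$ may be taken as the smallest integer exceeding $N/2$). There is essentially no obstacle beyond bookkeeping: the only point requiring care is the normalization of the Fourier transform (which affects the constant $C(N)$ but not the structure of the argument) and verifying that the integrability $\widehat M\in L^1$ justifies pointwise Fourier inversion, so that the $h\in L^1$ witnessing $M=\widehat h$ is indeed produced explicitly.
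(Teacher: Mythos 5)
Your argument is correct and is essentially the paper's own proof: both apply Cauchy--Schwarz to the (inverse) Fourier transform of $M$ with a weight comparable to $(1+|\xi|^2)^{k/2}$, use Plancherel to bound the weighted $L^2$ factor by $\|M\|_{W^{k,2}}$, and observe that the complementary integral converges precisely when $2k>N$. The only cosmetic difference is that the paper writes the weight as $(1+|x|^k)$ rather than $(1+|\xi|^2)^{k/2}$, which changes nothing.
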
 
\begin{proof}
Let  $\rho$ be the inverse Fourier transform of $M$. 

Condition $M \in W^{k, 2} (\R^N) $ means that $ (1+ |x|^k) \rho \in L^2 (\R^N)$. Then by the Cauchy--Schwarz inequality 
\begin{align*}
\int_{\R^N} |\rho(x)| dx  & \le \left( \int_{\R^N} (1+ |x|^k)^{-2}dx \right)^{1/2} \left( \int_{\R^N} (1+ |x|^k)^{2}|\rho(x)|^2 dx \right)^{1/2} 
\\
&  
\le C(N) \|M\|_{W^{k,2}}. 
\end{align*}
So $M$ belongs to the Wiener algebra. 
\end{proof}

Note that the sufficient condition $M\in W^{k,2}$ is far from necessary: While the Wiener algebra is scale invariant, i.e.~the scaling operator $S_a$, $S_a f(x) = f(ax)$ is an isometry in $W$, one can easily see that the operator norm $\|S_a\|_{W^{k,2}\to W^{k,2} } \to \infty$ as $a\to 0$. 

The next lemma, an analog of Lemma \ref{l:wiener-schur-1}, gives a sufficient condition for Schur multipliers that are not translation invariant (i.e.~not of the form $M(t-s)$). 

\begin{lem}
\label{l:wiener-schur-2}
Let $M\in W(\R^N\times \R^N) = W(\R^{2N})$. Then $M$ is a Schur multiplier on the set of restrictedly bounded singular kernels in $\R^N$ with Schur norm at most $\|M\|_{W(\R^{2N})}$. 
\end{lem}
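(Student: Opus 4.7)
The plan is to mimic the argument behind Lemma \ref{l:wiener-schur-1}, but in $\R^{2N}$ instead of $\R^N$, exploiting that both variables $s$ and $t$ are available to absorb unimodular exponentials. Since $M \in W(\R^{2N})$, write $M = \widehat{h}$ for some $h \in L^1(\R^{2N})$ with $\|h\|_1 = \|M\|_W$. For a point $(a,b) \in \R^N \times \R^N$, the Fourier mode reads
\[
e^{-i(a\cdot s + b\cdot t)} = e^{-ia\cdot s}\, e^{-ib\cdot t},
\]
which factors as a unimodular function of $s$ times a unimodular function of $t$. Hence for each fixed $(a,b)$ the kernel $K(s,t)\, e^{-ia\cdot s} e^{-ib\cdot t}$ is obtained from $K$ by multiplying the ``test function'' $g$ by the unimodular $e^{-ia\cdot s}$ and $f$ by the unimodular $e^{-ib\cdot t}$; these are isometries of $L^{p'}(\nu)$ and $L^p(\mu)$ respectively and preserve supports, so this modified kernel is restrictedly $L^p$ bounded with the same constant $C$ as $K$.

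The main identity is then the Fourier representation
\[
K(s,t) M(s,t) \;=\; \int_{\R^{2N}} h(a,b)\, K(s,t)\, e^{-ia\cdot s} e^{-ib\cdot t}\, da\, db,
\]
which I would insert into the bilinear form against bounded $f$, $g$ with separated compact supports and interchange the order of integration. After interchange, the inner integral is exactly the bilinear form of $K$ against $g_a(s):=g(s)e^{-ia\cdot s}$ and $f_b(t):=f(t)e^{-ib\cdot t}$, which by restricted boundedness of $K$ is bounded by $C \|f_b\|_{L^p(\mu)} \|g_a\|_{L^{p'}(\nu)} = C \|f\|_{L^p(\mu)} \|g\|_{L^{p'}(\nu)}$. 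Integrating against $|h(a,b)|$ then yields the bound $C \|h\|_1 \|f\|_p \|g\|_{p'} = C \|M\|_W \|f\|_p \|g\|_{p'}$, which is the required estimate with Schur norm at most $\|M\|_W$.

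The only point that needs checking is the Fubini interchange. With $A := \supp g$ and $B := \supp f$ compact and separated, $K$ is in $L^2(\nu\times\mu)$ on $A\times B$ (by the definition of a singular kernel), and $f\cdot g$ is bounded there; since $\mu$ and $\nu$ are Radon, $\nu(A)$ and $\mu(B)$ are finite, so Cauchy--Schwarz gives
\[
\int_A\!\int_B |K(s,t)|\,|f(t)|\,|g(s)|\, d\mu(t)\, d\nu(s) < \infty,
\]
and multiplying by $|h(a,b)| \in L^1(\R^{2N})$ keeps the triple integral finite. This is the only mildly technical step; everything else is a direct extension of the one-variable averaging trick to $\R^{2N}$, and no properties of $M$ beyond membership in the Wiener algebra are used.
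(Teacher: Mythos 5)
Your proposal is correct and follows essentially the same route as the paper: averaging the kernels $K(s,t)e^{-ia\cdot s}e^{-ib\cdot t}$ (each restrictedly bounded with the same constant, since the exponentials are support-preserving unimodular multipliers) against the $L^1$ density whose Fourier transform is $M$. The paper states the conclusion of this averaging as immediate, whereas you additionally spell out the Fubini justification using the local $L^2$ bound on $K$ off the diagonal; that is a harmless and correct elaboration, not a different argument.
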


\begin{proof}
Let $p$, $\mu$, $\nu$ be fixed, and let $K$ be an $L^p$ restricted singular kernel on $R^N$. Since for $a\in \R$ the multiplication by $e^{ia x}$ is an isometry in $L^p(\mu)$ (and in $L^{p'}(\nu)$), the kernel $e^{-i a \cdot s} K(s, t) e^{-i b \cdot t}$ has the same $L^p$ restricted bound as the kernel $K$. 

Because $\rho \in L^1(\R^N\times \R^N)$, one can immediately see that the restricted norm of the ``averaged'' kernel $\wt K$, 
\[
\wt K(s, t):= \int_{\R^N\times \R^N} K(s, t) \rho(a, b) e^{-ia\cdot s} e^{-i b\cdot t} da db = K(s, t) \widehat \rho (s, t)
\]
is at most $\|\widehat \rho \|_{W} \|K\|\ti{restr}$, where $\|K\|\ti{restr}$ is the ($L^p$, $\mu$, $\nu$) restricted norm of $K$. 
\end{proof}

\begin{rem}
In the above Lemma \ref{l:wiener-schur-2} one can replace class $W=W(\R^N\times \R^N) $ by the class $\widehat\cM$ of the Fourier transforms of the measures (charges) of bounded variation in $\R^N\times \R^N$. 

This point of view unifies the situations described in Lemmas \ref{l:wiener-schur-1} and \ref{l:wiener-schur-2}. Namely, if one considers on the $N$-dimensional subspace  $D=\{ (-x, x): x\in \R^N\}$ the measure (not necessarily non-negative)  $\sigma$, $d\sigma= \rho(x)dx $ in the parametrization $x\mapsto (-x,x)$, $x\in\R^N$, then the Fourier transform $\widehat \sigma$ of $\sigma$ (treated as a measure on the whole $\R^N\times \R^N$) is exactly $\widehat\rho(t-s)$. 
\end{rem}

\begin{rem}
One can use Lemma \ref{l:sob-wiener} for a sufficient condition for $M\in W(\R^N\times \R^N)$; in this case it is sufficient that $M\in W^{k,2}(\R^{2N})$, $k>N$. 
\end{rem}

\begin{rem*}
As we already mentioned, the Wiener algebra is scale invariant, i.e.~the functions $M$ and $M_\e$, $M_\e(x):= M(x/\e)$ have the same norm in the Wiener algebra (a well known fact and an easy exercise).  The same, of course, holds for the space $\widehat \cM$. 

Therefore, if $M\in \widehat \cM$, and $M_\e(x) :=M(x/\e)$, then all $M_\e$ are Schur multipliers with the uniform estimate on the Schur norm. 

This fact will be exploited a lot in this paper.
\end{rem*}

Let us now state a simple corollary. 

\begin{cor}
\label{cor:Schur-mult}
Let $m$ be in the Sobolev space $H^k(\R^N)$, $k>N/2$. Then the kernels $K_\e$, $K_\e(s, t) := m((t-s)/\e)$ are Schur multipliers with uniformly bounded Schur norms.
\end{cor}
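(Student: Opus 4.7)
The plan is to chain together the three ingredients already established: the Sobolev-to-Wiener embedding (Lemma \ref{l:sob-wiener}), the fact that Wiener-algebra functions of $t-s$ are Schur multipliers (Lemma \ref{l:wiener-schur-1}), and the scale invariance of the Wiener algebra noted in the preceding remark. Since this is essentially an assembly of results already in hand, there is no real obstacle; the only point that requires any care is the uniformity in $\e$, and this is precisely what scale invariance delivers.

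First I would observe that the hypothesis $m\in H^k(\R^N)$ with $k>N/2$ places $m$ in the Sobolev space $W^{k,2}(\R^N)$, so Lemma \ref{l:sob-wiener} applies and yields $m\in W(\R^N)$ with $\|m\|_W \le C(N)\|m\|_{W^{k,2}}$. Next, for each $\e>0$ I would set $M_\e(x):=m(x/\e)$ and invoke scale invariance of the Wiener algebra, which gives $\|M_\e\|_W = \|m\|_W$ for every $\e>0$. (This is the step where the naive estimate through $W^{k,2}$ would fail, since $\|S_a f\|_{W^{k,2}}$ blows up as $a\to 0$; going through $W$ removes that obstruction.)

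Finally I would apply Lemma \ref{l:wiener-schur-1} to $M_\e$: the associated kernel $\widetilde{M_\e}(s,t) := M_\e(t-s) = m((t-s)/\e)$, which is exactly $K_\e(s,t)$, is a Schur multiplier with Schur norm at most $\|M_\e\|_W = \|m\|_W \le C(N)\|m\|_{W^{k,2}}$. Since this bound is independent of $\e$, the family $\{K_\e\}_{\e>0}$ has uniformly bounded Schur norms, proving the corollary.
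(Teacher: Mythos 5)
Your argument is exactly the intended one: the paper derives this corollary by combining Lemma \ref{l:sob-wiener} (Sobolev embedding into the Wiener algebra), Lemma \ref{l:wiener-schur-1} (Wiener functions of $t-s$ are Schur multipliers), and the scale invariance of the Wiener norm to get uniformity in $\e$. Your proof is correct and matches the paper's approach.
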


\subsection{Smooth mollifying multipliers}
\label{s:smm}

We can summarize the above discussion in the following
 proposition. 

\begin{prop}
\label{p:smm}
Let $m$ be a function on $\R^N$ such that $m\equiv 0$ in a neighborhood of $0$, and $1-m\in H^k(\R^N) =W^{k,2}(\R^N)$, $k>N/2$. Then the functions $M_\e$, 
\begin{align*}
M_\e(s, t) := m((t-s)/\e)
\end{align*}
is the family of smooth regularized multipliers, described above in Section \ref{ss:main-results}, meaning that

\begin{enumerate}
\item $M_\e (s, t)\to 1$ as $\e\to 0$ uniformly on all sets $\{s, t \in\R^N: |s-t| >a\}$, $a>0$; 
  \item For any singular kernel $K$ (with respect to Radon measures $\mu$ and $\nu$) the regularized kernels $K_\e = K M_\e$ are locally in $L^2(\mu\times\nu)$, so the corresponding operators are well defined for bounded compactly supported functions;
    \item If the kernel is restrictedly bounded in $L^p$ (i.e.~if the estimate \eqref{restr-bound} holds for all bounded $f$, $g$ with separated compact supports), and the measures $\mu$ and $\nu$ do not have common atoms, then the regularized integral operators $T_\e$ with kernels $K_\e$ are uniformly (in $\e$) bounded as operators $L^p(\mu) \to L^p(\nu)$. 
\end{enumerate}
\end{prop}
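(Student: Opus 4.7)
\emph{Plan.} Items (1) and (2) are direct, so I would verify them first. For (1), $1-m \in H^k(\R^N)$ with $k>N/2$ lies in the Wiener algebra by Lemma \ref{l:sob-wiener}, hence $1-m = \widehat\rho$ for some $\rho \in L^1$; by Riemann--Lebesgue $m(x) \to 1$ as $|x|\to\infty$, giving $M_\e(s,t) = m((t-s)/\e) \to 1$ uniformly on $\{|s-t| > a\}$ as $\e \to 0$. For (2), the hypothesis $m \equiv 0$ on some ball $B(0,\delta)$ yields $M_\e \equiv 0$ on $\{|s-t| < \delta\e\}$, and boundedness of $m$ (Sobolev embedding) together with $K$ being locally $L^2$ off the diagonal makes $K_\e = K M_\e$ locally in $L^2(\mu\times\nu)$ globally.

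Item (3) is the main content and I would attack it in two steps. The first step is Corollary \ref{cor:Schur-mult}: the functions $M_\e$ are Schur multipliers for the class of restrictedly $L^p$-bounded kernels, with uniform Schur norm controlled by $\|1-m\|_{H^k}$; therefore $K_\e$ is restrictedly $L^p$-bounded with some constant $C'$ independent of $\e$. The second step upgrades this restricted bound on $K_\e$ to full $L^p$ operator boundedness of $T_\e$, crucially exploiting the vanishing of $K_\e$ on $\{|s-t|<\delta\e\}$ together with the no-common-atoms hypothesis. For bounded compactly supported $f, g$, I would partition $\R^N$ into half-open cubes $\{Q_j\}$ of side $\ell < \delta\e/(3\sqrt N)$, shifting the grid so that no atom of $\mu$ or $\nu$ lies on any cube boundary (possible because atoms of the two measures together form a countable set, and for each atom the set of ``bad'' shifts is a finite union of hyperplanes, hence Lebesgue-null). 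Writing $f_i = f\1_{Q_i}$ and $g_k = g\1_{Q_k}$, the ``near-diagonal'' pairs $|i-k|_\infty \leq 1$ contribute zero to $\langle T_\e f, g\rangle$ because $\diam(\overline{Q_i\cup Q_k}) < \delta\e$ forces $K_\e = 0$ on $Q_k\times Q_i$, while the ``far'' pairs $|i-k|_\infty \geq 2$ have $\overline{Q_i}, \overline{Q_k}$ disjoint with positive separation so that restricted boundedness applies to $(f_i, g_k)$.

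To conclude the upgrade I would organize the collection of far pairs into $O_N(1)$ ``product blocks'' $(I_c, K_c)$ with the property that $\bigcup_{i\in I_c}\overline{Q_i}$ and $\bigcup_{k\in K_c}\overline{Q_k}$ are separated by positive distance; then restricted boundedness may be applied once per block to the pair of functions $f\1_{\bigcup_{I_c}Q_i}$ and $g\1_{\bigcup_{K_c}Q_k}$ (whose $L^p$ and $L^{p'}$ norms are dominated by $\|f\|_p$ and $\|g\|_{p'}$), and summing over the $O_N(1)$ blocks gives $|\langle T_\e f, g\rangle| \leq C''\|f\|_p\|g\|_{p'}$ uniformly in $\e$, yielding (3). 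The main obstacle will be this final combinatorial organization: a naive $3^N$-coloring of cubes is not enough, since cubes of distinct colors can still be adjacent and the resulting ``color unions'' therefore fail to be separated, so a more refined construction — for instance a coordinate-by-coordinate slab/residue decomposition, or an iteration across scales bounded below by $\delta\e$ — is needed to cover every far-pair configuration by a bounded number of separated product blocks.
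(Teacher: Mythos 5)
Your items (1) and (2), and the first half of item (3) --- reducing matters to the uniform restricted boundedness of $K_\e$ via Corollary \ref{cor:Schur-mult} --- agree with the paper. The gap is in the second half of (3), where you propose to upgrade the uniform restricted bound to a uniform operator bound by covering the ``far'' pairs of grid cubes by $O_N(1)$ separated product blocks. That covering does not exist, already for $N=1$: the set $F=\{(i,k):|i-k|_\infty\ge 2\}$ admits no finite cover by products $I_c\times K_c$ satisfying $|i-k|_\infty\ge 2$ for all $i\in I_c$, $k\in K_c$. Indeed, suppose $I_1\times K_1,\dots,I_L\times K_L$ were such a cover and assign to each $x\in\Z$ the signature $\sigma(x)=\bigl(\1[2x\in I_c],\1[2x\in K_c]\bigr)_{c=1}^L$. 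If $\sigma(x)=\sigma(y)$ with $x\ne y$, the far pair $(2x,2y)$ lies in some $I_c\times K_c$, so $2x\in I_c$ and $2y\in K_c$; equality of signatures then puts $2y\in I_c\cap K_c$, contradicting the separation of $I_c$ and $K_c$. Hence $\sigma$ is injective on $\Z$ while taking at most $4^L$ values --- impossible. Restricting to $[0,n]$ shows the number of blocks needed grows like $\log n$, i.e.\ like $\log\bigl(\diam(\supp f\cup\supp g)/\e\bigr)$, so your fallback options (residue decompositions, iteration over scales down to $\delta\e$) yield a constant that blows up as $\e\to 0$ and destroy exactly the uniformity you need. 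Summing over relative positions $v=k-i$ one at a time also fails: each $v$ contributes $C\|f\|_p\|g\|_{p'}$ with no decay in $v$, since no pointwise size bound on $K$ is assumed beyond restricted boundedness and local $L^2$.

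The paper's mechanism for this upgrade is designed precisely to avoid any covering: it is Theorem \ref{t:restr-bd1}, proved via the separated partitions of unity of Lemma \ref{l:split1} and Corollary \ref{c:split2}. One constructs \emph{separated} Borel sets $E^1_n$, $E^2_n$ such that multiplication by $\1_{E^k_n}$ converges weakly to $\tfrac12 I$ and $\|\1_{E^k_n}h\|_p\to 2^{-1/p}\|h\|_p$; since $K_\e\in L^2\ti{loc}(\mu\times\nu)$, the operator restricted to a cube is Hilbert--Schmidt, hence compact, so $\La T_\e\1_{E^1_n}f,\1_{E^2_n}g\Ra\to\tfrac14\La T_\e f,g\Ra$, while the restricted bound applied to the separated pair gives $\tfrac12 C'\|f\|_p\|g\|_{p'}$ in the limit, whence $\|T_\e\|\le 2C'$ uniformly in $\e$. (This construction is also where the no-common-atoms hypothesis does real work, not merely in positioning a grid.) You should replace your covering argument by an appeal to, or a proof of, Theorem \ref{t:restr-bd1}.
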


\section{Restricted boundedness implies boundedness} 
 
In this section we are going to show that for kernels which are locally $L^2(\mu\times\nu)$ (and also for non-negative kernels) the restricted $L^p$ boundedness with restricted norm $C$ implies boundedness with the norm at most $2C$. 

The main application of this result is as follows. Suppose $K$ is an $L^p$ restrictedly bounded singular kernel. Multiplying $K$ by a smooth mollifying multiplier, described above in Section \ref{sec2}, see for example Proposition \ref{p:smm}, we will get a family of uniformly (in $\e$) restrictedly bounded kernels $K_\e:= M_\e K$. The kernels $K_\e$ are locally $L^2(\mu\times\nu)$, so by the main result of this section the corresponding operators $T_\e$ are uniformly bounded operators from $L^p(\mu)$ to $ L^p(\nu)$. 

Let $T$ be a limit point of $T_\e$, $\e\to 0$ in the weak operator topology. 
Statement \cond1 of Proposition \ref{p:smm} will imply that for $f$ and $g$ with separated compact supports 
\begin{align*}
\La Tf, g\Ra_\nu = \int K(s, t) f(t) g(s) \, d\mu(t) d\nu(s) , 
\end{align*}
so the limit operator $T$ can be indeed considered as a singular integral operator with kernel $K$.

The statement about non-negative kernels will be used to show that under some additional assumptions about kernel $K$ (which do not involve non-negativity),  the restricted boundedness of $K$ implies that the measures $\mu$ and $\nu$ satisfy two weight Muckenhoupt condition, see Theorem \ref{t:Muckehnoupt} below. 

\subsection{Separated partitions of unity}

\begin{lem}
\label{l:split1}
Let $\sigma$ be a Radon measure without atoms in $\R^N$. There exist Borel sets $E^1_n$, $E^2_n$, $n\in\N$ such that  
\begin{enumerate}
    \item For all $n\in \N$ the sets $E_n^1$ and $E_n^2$ are separated, i.e.~$\dist(E_n^1, E_n^2) > 0$. 
    
    \item The operators $P_n^k$, $P_n^k f := \1_{E^k_n} f$, $k=1, 2$ converge to $\frac12 I$ in weak operator topology in $L^2(\sigma)$. 
    
    \item For any $p\in [1, \infty)$ and for $k=1, 2$
    \[
    \lim_{n\to\infty} \|\1_{E_n^k} f \|_{L^p(\sigma)} = 2^{-1/p} \| f \|_{L^p(\sigma)}, \qquad \forall f\in L^p(\sigma). 
    \]
\end{enumerate}
\end{lem}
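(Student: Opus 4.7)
The plan is to reduce conditions (ii) and (iii) to a single measure-theoretic statement, then build the sets by iterated hyperplane bisection adapted to $\sigma$, with geometric buffers for separation.

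First, observe that both (ii) and (iii) follow from the single assertion
\[
\sigma(E_n^k \cap A) \longrightarrow \tfrac12 \sigma(A) \qquad \text{for every Borel $A$ with $\sigma(A) < \infty$.}
\]
Indeed, applying this to the non-atomic Radon measure $|f|^p\,d\sigma$ together with density of bounded Borel-measurable simple functions in $L^p(\sigma)$ yields (iii) at every $p\in[1,\infty)$; condition (ii) then follows from the $p=2$ case by polarization in $L^2(\sigma)$.

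For the construction, the core step is a bisection lemma: for any compact $B\subset\R^N$ with $\sigma(B)>0$ and any $\eta>0$ there exist a unit vector $e$, a value $t_0\in\R$, and $\delta>0$ such that $\sigma(B\cap\{x\cdot e\le t_0\}) = \sigma(B)/2$ and $\sigma(B\cap\{|x\cdot e - t_0|\le\delta\})<\eta$. Existence of such an $e$ follows because the set of directions for which $\pi_e(\sigma|_B)$ has atoms has $S^{N-1}$-Lebesgue measure zero (a Fubini argument using non-atomicity of $\sigma$), so for such $e$ the distribution function $F_e(t) = \sigma(B\cap\{x\cdot e\le t\})$ is continuous and attains its median at a continuity point, making a thin surrounding slab $\sigma$-small. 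Fix a compact exhaustion $K_m\uparrow\R^N$ and, for each $n$, iterate the bisection $n$ times starting from $K_n$, with buffer mass at level $j$ controlled by $2^{-j-n}$. This produces $2^n$ pairwise separated "leaves" $\{P_{n,j}\}$ with $\sigma(P_{n,j}) = \sigma(K_n)2^{-n} + o(1)$ and $\max_j\sigma(P_{n,j}) \to 0$.

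To split the leaves, assign each leaf $P_{n,j}$ an independent uniform random label $\ell(P_{n,j})\in\{1,2\}$ and set $E_n^k := \bigcup\{P_{n,j} : \ell(P_{n,j}) = k\}$. Then $\mathbb E\,\sigma(E_n^1\cap A) = \sigma(A)/2 + o(1)$ and
\[
\mathrm{Var}\,\sigma(E_n^1\cap A) \;=\; \tfrac14\sum_j \sigma(P_{n,j}\cap A)^2 \;\le\; \tfrac14\bigl(\max_j \sigma(P_{n,j})\bigr)\cdot\sigma(A) \;\longrightarrow\; 0.
\]
A Chebyshev/Borel--Cantelli argument, applied over a countable family of test sets dense in the bounded Borel sets (e.g.\ finite unions of dyadic cubes inside each $K_m$) and combined with Radon inner regularity of $\sigma$, produces a deterministic sequence of labelings for which the desired convergence holds simultaneously for every $A$ of finite $\sigma$-measure. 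Condition (i) is immediate from the buffers separating distinct leaves.

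The main obstacle is the measurable selection of a good bisecting direction $e$ as a function of the current leaf $B$, together with simultaneous control of the buffer masses across all levels of the binary tree so that their total contribution vanishes in the limit. The non-atomicity and Radon hypotheses on $\sigma$ make this possible, but the bookkeeping (especially handling pieces whose $\sigma$-mass is concentrated on hyperplanes perpendicular to coordinate axes, where non-coordinate directions must be used) is the only delicate part; no new conceptual ideas beyond non-atomicity are needed.
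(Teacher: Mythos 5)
Your reduction of (ii) and (iii) to the single statement $\sigma(E_n^k\cap A)\to\frac12\sigma(A)$ is fine and matches the spirit of the paper's reduction to dyadic cubes, but the construction of the leaves has a quantitative error that breaks the argument. You perform only $n$ mass-bisections of $K_n$, so each leaf has $\sigma(P_{n,j})\approx \sigma(K_n)\,2^{-n}$; the asserted $\max_j\sigma(P_{n,j})\to 0$ is therefore false whenever $\sigma(K_n)$ grows at least like $2^n$ --- already for Lebesgue measure on $\R$ with $K_n=[-2^n,2^n]$ the leaf masses are constant, and in $\R^N$, $N\ge 2$, they blow up. Worse, what your variance bound actually requires is $\max_j\sigma(P_{n,j}\cap A)\to 0$ for each fixed test set $A$, and a global equal-mass bisection of $K_n$ gives no control over this: in the Lebesgue example with $A=[0,1]$, the entire set $A$ can lie inside a single leaf for every $n$, so $\sigma(E_n^1\cap A)$ equals $0$ or $\sigma(A)$ according to a single coin flip and never approaches $\sigma(A)/2$. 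The repair is to refine until the pieces are small in absolute $\sigma$-mass (do $m_n$ bisections with $\sigma(K_n)2^{-m_n}\to 0$), i.e.\ the partition must be fine relative to every test set, not relative to the total mass of $K_n$. This localization is precisely what the paper's proof is built around: it chooses the scale $\delta$ of the constituent dyadic cubes only after computing $\alpha=\min\sigma(Q)$ over the non-null dyadic cubes $Q$ of size $2^{-n}$ in $[-2^n,2^n)^N$, requires every $\delta$-cube to have mass less than $2^{-n}\alpha$, distributes the $\delta$-cubes greedily (and deterministically) within each such $Q$, and then shrinks each $\delta$-cube slightly to obtain separation.

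A secondary gap is the justification of the bisection lemma. Fubini gives that $\sigma(B\cap\{x\cdot e=t\})=0$ for almost every \emph{pair} $(e,t)$, which is automatic (for each fixed $e$ the exceptional $t$ form a countable set) and does not imply that for a.e.\ $e$ there are \emph{no} exceptional $t$. The claim you need --- that for a.e.\ direction $e$ no hyperplane with normal $e$ carries positive $\sigma|_B$-mass --- is true, but it requires a genuine argument (for each $m$ the hyperplanes of mass $\ge 1/m$ force positive-mass intersections in lower-dimensional affine subspaces unless there are few of them; one then inducts on dimension). Both issues disappear if you adopt the cube-based splitting, and the random labeling can be replaced by the paper's simpler greedy assignment (always add the next small cube to whichever of the two sets currently has smaller mass inside the ambient $2^{-n}$-cube).
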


\begin{defn*}
The standard grid $G$ of size $r$ in $\R^N$ is the collection of cubes $r \bj + [0, r)^N$, $\bj\in \Z^N$. A grid of size $r$ is a translation of the standard grid by some $a\in\R^N$.

The boundary $\partial G$ of a grid $G$ is the union of all boundaries $\partial Q$, $Q\in G$. 
\end{defn*}

For a cube $R= x + [0,r)^N$, $x\in \R^N$, and $\tau \in\R$ let $\tau R$ denote its dilation, $\tau R := x+ [0, \tau r)^N$. Note, that the cube is dilated with respect to its corner, not its center. 

\begin{lem}
\label{l:s-count}
Let $\sigma$ be a Radon measure in $\R^N$. For a cube $R= x + [0,r)^N$  let $\tau R$ be its dilation as described above. Then 
\[
\lim_{\tau\to 1} \sigma(\tau R) = \sigma (R). 
\]
\end{lem}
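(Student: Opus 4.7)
I would deduce the lemma from the standard continuity of Radon measure, by splitting the two-sided limit into its two one-sided pieces. Since $\sigma$ is Radon it is finite on the compact set $\overline{2R}$, so all the masses that appear are finite and both the upward and downward monotone continuities of $\sigma$ are available.

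For $\tau\nearrow 1$ the half-open cubes $\tau R = x+[0,\tau r)^N$ form an increasing family with $\bigcup_{\tau<1}\tau R = R$, so continuity of $\sigma$ from below gives $\sigma(\tau R)\nearrow \sigma(R)$.  For $\tau\searrow 1$ the cubes form a decreasing family whose intersection $\bigcap_{\tau>1}\tau R$ is the closed cube $\overline R = x+[0,r]^N$, so continuity from above gives $\sigma(\tau R)\searrow \sigma(\overline R)$.  Combining the two one-sided pieces, the two-sided limit equals $\sigma(R)$ precisely when $\sigma(\overline R\setminus R)=0$.

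The main (and really only) obstacle is this last equality: for a general Radon measure $\sigma$ the upper faces $\overline R\setminus R = \bigcup_{i=1}^N \{y\in\overline R : y_i = x_i+r\}$ can carry positive $\sigma$-mass, so the statement as literally written needs a bit of care. The standard workaround is to observe that the upper faces of the cubes $x+[0,s)^N$ for distinct values $s>0$ lie in pairwise disjoint parallel hyperplanes inside the fixed compact set $\overline{2R}$ of finite $\sigma$-mass; hence only countably many $s$ can yield an upper face of positive $\sigma$-mass.  For any $r$ outside that countable exceptional set one has $\sigma(\overline R)=\sigma(R)$, and the two-sided statement follows.  This is also exactly the genericity needed by the intended application in the proof of Lemma~\ref{l:split1}, where one is free to perturb the cube slightly so as to avoid the exceptional set.
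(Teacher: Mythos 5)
Your treatment of the limit from below is exactly the paper's argument: the cubes $\tau R$, $\tau<1$, increase to $R$, so continuity from below gives $\lim_{\tau\to 1-}\sigma(\tau R)=\sigma(R)$. This is in fact all the paper proves, and all that is used later (in the construction of the sets $E^k_n$ in Lemma~\ref{l:split1} one only shrinks cubes); the paper dismisses the $\tau\to 1+$ case as ``an easy exercise'' via decreasing sequences. Your proposal goes further and correctly points out that this exercise, done literally, yields $\lim_{\tau\to 1+}\sigma(\tau R)=\sigma(\overline R)$, which can exceed $\sigma(R)$ when the upper faces $\overline R\setminus R$ carry positive mass (e.g.\ a non-atomic measure concentrated on a hyperplane), so the two-sided statement as written is not true for every cube and every Radon measure. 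Your fix --- only countably many side lengths $s$ can give an upper face of positive mass, since these faces lie in disjoint hyperplanes inside a compact set of finite measure, so a generic cube works --- is the standard and correct repair, and it is indeed the kind of genericity one would invoke if the $\tau\to 1+$ direction were ever needed. In short: same method as the paper on the direction that matters, plus a legitimate correction of a point the paper glosses over.
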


\begin{proof}
The statement of the lemma is a trivial corollary of countable additivity.  Recall that for finite measures countable additivity is equivalent to the fact that $\sigma(\cup_{k\ge 0} R_k) = \lim_{k\to \infty}\sigma (R_k)$ for any increasing sequence of measurable sets $R_k$. Since the family $\tau R$, $\tau>0$, is an increasing (with respect to $\tau$) family of cubes, and since $\cup_{\tau\in(0,1)} \tau R = R$, we get that
\[
\lim_{\tau\to 1-} \sigma(\tau R) = \sigma (R).
\]

Since in what follows we only need this identity, we leave the rest of the lemma (i.e.~the case of $\lim_{\tau\to 1+}$) as an easy exercise to the reader, who just needs to recall the restatement of countable additivity in terms of decreasing sequences of sets. 
\end{proof}

\begin{proof}[Proof of Lemma \ref{l:split1}]
Since for any sets $E^k_n$ the operators $P_n^k$, $P_n^k f := \1_{E^k_n} f$ are contractions in all $L^p(\sigma)$, to prove \cond2 it is sufficient to show that 
\[
\lim_{n\to\infty} \langle P^k_n f, g \rangle = \frac12 \langle f, g \rangle 
\]
for $f$ and $g$ in some dense in $L^p(\sigma)$ set. In particular, it is sufficient to prove this identity for $f$ and $g$ being the finite sums $\sum_{j}\alpha_j \1_{Q_j}$, where $Q_j$ are some (standard) dyadic cubes. 

Because of the continuity of the norm, it is also sufficient to check condition \cond3 on a dense set,  for example again on the finite sums  $\sum_{j}\alpha_j \1_{Q_j}$. 

So, it is sufficient to show that for any standard dyadic cube $Q$ 
\begin{equation}
\label{half1}
\lim_{n\to\infty} \sigma( E_n^k \cap Q ) = \frac12 \sigma(Q), \qquad k=1, 2. 
\end{equation}

To prove \eqref{half1} we will construct the sets $E_n^k$ in such a way that 
\begin{equation}
\label{half2}
\left| \sigma(E_n^k \cap Q) -  \sigma(Q)/2 \right| < 2^{-n} \sigma(Q) , \qquad k=1, 2
\end{equation}
for all standard dyadic cubes of size $2^{-n}$ inside the large cube $Q^n:=[-2^n , 2^n)^N$. Trivially, the same estimate will hold for all larger dyadic cubes inside $Q^n$, which trivially implies \eqref{half1}. 

Let us construct the sets $E_n^k$. For each $n$ we will first construct the sets $\wt E_n^k$, such that the sets $\wt E^1_n$ and $\wt E_n^2$ are disjoint (but not necessarily separated), and \eqref{half2} is satisfied. The sets $\wt E_n^k$ will be constructed as unions of the (small) standard dyadic cubes, and by shrinking each cube a little, we will get separated sets $E_n^1$ and $E_n^2$. 

Let $\alpha =\min_Q \sigma (Q)$, where the minimum is taken over all all standard dyadic cubes $Q\subset Q^n$ of size $2^{-n}$ for which $\sigma(Q)\ne 0$. 
Pick $\delta_0>0$ such that 
\[
\sigma(R) < 2^{-n}\alpha
\]
for all cubes $R\subset Q^n$ such that $\ell(R)<\delta_0$ (recall, that $\ell(R)$ is the \emph{size}, i.e.~the sidelength of the cube $R$). Note, that clearly $\delta_0<2^{-n}$. Pick some $\delta<\delta_0$ of form $\delta = 2^{-m}$, $m\in \Z$. 

Let us split the cube $Q^n$ into the standard dyadic cubes of size $\delta$, and construct the sets $\wt E^1_n$ and $\wt E^2_n$ as the finite unions of such cubes. Namely, for each dyadic cube $Q\subset Q^n$, $\ell(Q) = 2^{-n}$ we distribute the dyadic cubes $R\subset Q$, $\ell(R) =\delta$ between the sets $\wt E^1_n$ and $\wt E^2_n$ as follows: 

We assign the first such cube $R$ to be in $\wt E^1_n$, the second one to be in the set $\wt E^2_n$, and then on the each next step we will add a cube to the set of smaller measure $\sigma$ (in the case when both sets have the same measure, we can add the next cube to either of the sets, say to $\wt E^1_n$ for definiteness). We stop when all such cubes $R$ are exhausted, and then repeat the procedure for the other cubes $Q$. 

Since by the choice of $\delta$  for each dyadic cube $Q\subset Q^n$, $\ell(Q) = 2^{-n}$ we have
\[
\sigma(R) <2^{-n} \alpha \le 2^{-n} \sigma(Q),
\]
and since on each step we add such cube $R$ to a set of smaller (or equal) measure, we can conclude that 
\[
\left| \sigma(\wt E_n^1 \cap Q) -  \sigma(\wt E_n^2 \cap Q) \right| < 2^{-n} \sigma(Q) , 
\]
which in turn implies (because $\sigma(\wt E^1_n\cap Q) + \sigma(\wt E^2_n\cap Q) =\sigma(Q)$) that for $k=1, 2$
\[
\left| \sigma(\wt E_n^k \cap Q) -  \sigma(Q)/2 \right| < 2^{-n-1} \sigma(Q).
\]

We then obtain the sets $E^1_n$ and $E^2_n$ by replacing each dyadic cube $R$, $\ell(R)=\delta$ in the sets  $\wt E^1_n$ and $\wt E^2_n$ by the cube $\tau R$, where $\tau \in (0,1)$ is sufficiently close to $1$. Clearly, for any $\tau\in (0,1)$ the sets $E^1_n$ and $E^2_n$ are separated. Moreover, Lemma \ref{l:s-count} assures that by picking $\tau$ sufficiently close to $1$ we can make the differences $\sigma(\wt E^k_n) - \sigma( E^k_n)$  as small as we want, so if $\tau$ is sufficiently close to $1$ then the estimate \eqref{half2} holds.  
\end{proof}

Let us now consider the general case. For a measure $\mu$ let $\mu\ti c$ and $\mu\ti a$ be the continuous and purely atomic parts of $\mu$ respectively, $\mu= \mu\ti c + \mu\ti a$. For a $\mu$-measurable function $f$ consider the decomposition $f=f_{\mu\ti c} + f_{\mu\ti a}$, where $f_{\mu\ti c}$ and $f_{\mu\ti a}$ are the projections of $f$ onto continuous and atomic parts of $\mu$ respectively, 
\[
f_{\mu\ti a}(x) = \left\{
\begin{array}{ll}
f(x), \qquad & \mu(\{x\}) >0; \\
0, & \mu(\{x\}) =0 .
\end{array}
\right.
\]

\begin{cor}
\label{c:split2}
Let $\mu$ and $\nu$ be Radon measures in $\R^N$ without common atoms. There exist Borel sets $E^1_n$, $E^2_n$, $n\in\N$ such that  
\begin{enumerate}
    \item For all $n\in \N$ the sets $E_n^1$ and $E_n^2$ are separated, i.e.~$\dist(E_n^1, E_n^2) > 0$. 
    
    \item The operators $P_n^1$ and $P_n^2$, $P_n^1 f := \1_{E^1_n} \left(f_{\mu\ti c} + \frac12 f_{\mu\ti a}\right)$, $P_n^2 g := \1_{E^2_n} \left(g_{\nu\ti c} + \frac12 g_{\nu\ti a}\right)$ converge to $\frac12 I$ in weak operator topology of $L^2(\mu)$ and $L^2(\nu)$ respectively.  
    
    \item For any $p\in [1, \infty)$ and for any $f\in  L^p(\mu)$, $g\in L^{p}(\nu)$
    \[
    \lim_{n\to\infty} \|\1_{E_n^1} f \|_{L^p(\mu)} \le 2^{-1/p} \| f \|_{L^p(\mu)},\quad 
    \lim_{n\to\infty} \|\1_{E_n^2} g \|_{L^p(\nu)} \le 2^{-1/p} \| g \|_{L^p(\nu)}. \qquad 
    \]
\end{enumerate}
\end{cor}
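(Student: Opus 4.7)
The plan is to reduce to the atomless case already handled by Lemma~\ref{l:split1} and then surgically insert the atoms of $\mu$ and $\nu$, exploiting the hypothesis that they are disjoint to preserve positive separation of the two resulting sets.

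First I would set $\sigma := \mu\ti c + \nu\ti c$, an atomless Radon measure on $\R^N$, and apply Lemma~\ref{l:split1} to $\sigma$ to obtain separated Borel sets $\wt E_n^1, \wt E_n^2$ for which the multiplication operators by $\1_{\wt E_n^k}$ converge to $\frac12 I$ in the weak operator topology of $L^2(\sigma)$. To transfer this to $L^2(\mu\ti c)$, let $h := d\mu\ti c/d\sigma$; the inequality $\sigma\ge\mu\ti c$ gives $0\le h\le 1$. For any compact cube $Q$, both $\1_Q$ and $h\1_Q$ lie in $L^2(\sigma)$, so pairing the weak operator convergence against this pair yields
\[
\mu\ti c(\wt E_n^k\cap Q) \;=\; \int \1_{\wt E_n^k}\cdot h\1_Q\, d\sigma \;\longrightarrow\; \frac12\int h\1_Q\, d\sigma \;=\; \frac12\mu\ti c(Q).
\]
A density argument with finite sums $\sum\alpha_j\1_{Q_j}$ upgrades this to $\int_{\wt E_n^k}\phi\, d\mu\ti c\to\frac12\int\phi\, d\mu\ti c$ for every $\phi\in L^1(\mu\ti c)$, which delivers both weak convergence of $\1_{\wt E_n^k}\to \frac12$ in $L^2(\mu\ti c)$ and convergence of the $L^p(\mu\ti c)$-norms. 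The same argument with $d\nu\ti c/d\sigma$ handles $\nu\ti c$.

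Next I would insert the atoms. Enumerate $A_\mu=\{a_j\}$ and $A_\nu=\{b_j\}$, which are disjoint by hypothesis. Set $A_n:=\{a_1,\ldots,a_n\}\cap A_\mu$ and $B_n:=\{b_1,\ldots,b_n\}\cap A_\nu$, and pick $r_n>0$ small enough that (a) the open $r_n$-neighborhoods $U_n^A$ and $U_n^B$ of $A_n$ and $B_n$ are mutually disjoint, which is possible because $A_n$ and $B_n$ are disjoint finite sets, and (b) $\mu\ti c(U_n^A\cup U_n^B)+\nu\ti c(U_n^A\cup U_n^B)\to 0$, which is possible because $\mu\ti c$ and $\nu\ti c$ are atomless, so the measure of a small ball around each fixed atom is $o(1)$ as the radius shrinks. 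Then define
\[
E_n^1 := \bigl(\wt E_n^1\setminus(A_\mu\cup A_\nu\cup U_n^B)\bigr)\cup A_n, \qquad E_n^2 := \bigl(\wt E_n^2\setminus(A_\mu\cup A_\nu\cup U_n^A)\bigr)\cup B_n,
\]
so that $E_n^1\cap A_\mu=A_n$, $E_n^2\cap A_\nu=B_n$, and neither set contains any atom of the ``opposite'' measure. Claim~(i) follows from a four-case distance check giving $\dist(E_n^1,E_n^2)\ge\min\bigl(\dist(\wt E_n^1,\wt E_n^2),\,\dist(A_n,B_n),\,r_n\bigr)>0$.

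Claims~(ii) and~(iii) then split along $\mu=\mu\ti c+\mu\ti a$. For~(ii), the $\mu\ti c$-component of $\langle P_n^1 f,g\rangle_\mu$ is $\int_{E_n^1}fg\, d\mu\ti c$, which differs from $\int_{\wt E_n^1}fg\, d\mu\ti c$ only by integrals over the $\mu\ti c$-null set $A_\mu\cup A_\nu$ and over $U_n^B$ (of $\mu\ti c$-mass $o(1)$), so it converges to $\frac12\int fg\, d\mu\ti c$ by the first paragraph; the $\mu\ti a$-component reduces to the finite sum $\frac12\sum_{a\in A_n}f(a)g(a)\mu(\{a\})$, which converges to $\frac12\int fg\, d\mu\ti a$ because $A_n\uparrow A_\mu$. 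For~(iii), the same splitting gives $\|P_n^1 f\|_{L^p(\mu)}^p\to\frac12\int|f|^p\, d\mu\ti c+2^{-p}\int|f|^p\, d\mu\ti a\le \frac12\|f\|_{L^p(\mu)}^p$ because $2^{-p}\le \frac12$ for $p\ge 1$; the $E_n^2$ side is identical. The main obstacle I anticipate is precisely the transfer step of the first paragraph: $\sigma$-balance on dyadic cubes does not obviously force $\mu\ti c$- and $\nu\ti c$-balance separately, but the inequality $h\le 1$ keeps the test function $h\1_Q$ inside $L^2(\sigma)$, making the transfer essentially automatic once one spots it.
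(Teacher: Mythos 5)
Your proof is correct and follows essentially the same route as the paper's: split off $\sigma=\mu\ti c+\nu\ti c$, apply Lemma~\ref{l:split1}, transfer the balance from $\sigma$ to $\mu\ti c$ and $\nu\ti c$ via the bounded density $d\mu\ti c/d\sigma$, and then adjoin the (pairwise disjoint) atoms while deleting small neighborhoods of the opposite measure's atoms to keep the sets separated; your variant of removing only $r_n$-neighborhoods of the first $n$ opposite atoms, rather than balls around all of them, works just as well. Note that, exactly as in the paper's own proof, what you actually establish in~(iii) is the bound for $\|P_n^1f\|_{L^p(\mu)}$ rather than for $\|\1_{E_n^1}f\|_{L^p(\mu)}$ as literally stated (the latter fails for purely atomic $f$), and that is the version used in Theorem~\ref{t:restr-bd1}.
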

\begin{proof}
Take $\sigma := \mu\ti c + \nu\ti c$, where $\mu\ti c $ and $\nu\ti c$ are continuous parts of the measures $\mu$ and $\nu$ respectively. 
Then 
\begin{equation}
d \mu\ti c = w d\sigma, \qquad d\nu\ci c = v d\sigma, 
\end{equation}
where $w, v\ge 0$ are some weights, $w+v\equiv 1$. 

If the measures $\mu$ and $\nu$ do not have atoms, then the  sets $E_n^1$ and $E_n^2$ from the above Lemma \ref{l:split1} are exactly the sets we need. Indeed, for bounded functions $f$ and $g$ statements \cond2 and \cond3 of the corollary (weak convergence and convergence of norms in $L^p(\mu)$ and $L^p(\nu)$) follow from the corresponding statements of Lemma  \ref{l:split1} (convergence in $L^p(\sigma)$). Since the operators $P^k_n$ are contractions for any choice of the sets $E_n^k$, one can use $\e/3$-theorem to extend the statements \cond2 and \cond3 from  a dense sets of bounded functions to all $L^p(\mu)$ ($L^p(\nu)$). 

For the general case, 
let 
\[
\mu\ti{a} = \sum_{n} \alpha_n \delta_{x_n}, \qquad \nu\ti{a} = \sum_{n} \beta_n \delta_{y_n}
\]
be the purely atomic parts of the measures $\mu$ and $\nu$ respectively. Without loss of generality, assume that the sequences $\{\alpha_n\}$ and $\{\beta_n\}$ are non-increasing. Note that since the measures $\mu$ and $\nu$ do not have a common atom, $x_n\ne y_k$ for all $n$ and $k$.

Let $\wt E_n^1$ and $\wt E_n^2$ denote  the sets obtained in Lemma \ref{l:split1} for $\sigma := \mu\ti{c} +\nu\ti c$ (we use the notation $\wt E^k_n$ instead of $E^k_n$ because we want to use the notation $E^k_n$ for the final ``output''). We can always assume without loss of generality that $x_j, y_j\notin \wt E^k_n$ (for all $j$, $n$ and $k$). Define
\[
E^1_n := \Bigl(\wt E^1_n \cup \bigcup_{j=1}^n x_j\Bigr) \setminus \bigcup_{j=1}^\infty B(y_j, r^{(n)}_j), \qquad 
E^2_n := \Bigl(\wt E^2_n  \cup \bigcup_{j=1}^n y_j \Bigr) \setminus \bigcup_{j=1}^\infty B(x_j, r^{(n)}_j ), 
\]
where for each $n$ the radii $r^{(n)}_j$ are picked in such a way that 
\[
\sum_{j=1}^\infty \left[ \sigma (B(x_j, r^{(n)}_j)) + \sigma (B(y_j, r^{(n)}_j)) \right] < 2^{-n} 
\]
and such that $ \bigcup_{j=1}^n x_j \cap \bigcup_{j=1}^\infty B(y_j, r^{(n)}_j)=\bigcup_{j=1}^n y_j  \cap \bigcup_{j=1}^\infty B(x_j, r^{(n)}_j ) =\varnothing$.

Let us show that $E_n^k$ are the desired sets. Let us decompose $f\in L^p(\mu)$ as $f= f\ti c + f\ti a$, where $f\ti a : = f \1\ci{\cup_{j=1}^\infty \{ x_j\}} $ is the purely atomic part of $f$. Clearly  
\[
\lim_{n\to \infty} \| f\ti a - \1\ci{E^1_n} f\ti a \|\ci{L^p(\mu)} = 0 ,  
\]
so the statements \cond2 and \cond3 of the corollary hold for purely atomic functions $f\ti a$. Note, that unlike Lemma \ref{l:split1} we have the inequality in statement \cond3 here, because
\[
\lim_{n\to \infty}  \| 2^{-1}\1\ci{E^1_n} f\ti a \|\ci{L^p(\mu)} = 2^{-1} \|f\ti a\|\ci{L^p(\mu)} \le 2^{-1/p} \|f\ti a\|\ci{L^p(\mu)} ,  
\]

We can also estimate 
\begin{equation}
\label{appr1}
\| f\ti c \1\ci{ E^1_n } - f\ti c \1\ci{\wt E^1_n } \|\ci{L^p(\mu_{\text c})}^p
\le \int\limits_{\bigcup_{j=1}^\infty B(y_j, r^{(n)}_j)} |f\ti c|^p d\mu\ti c \to 0 \qquad \text{as } n\to \infty . 
\end{equation}
because 
\[
\mu\ti c\Bigl( \bigcup_{j=1}^\infty B(y_j, r^{(n)}_j ) \Bigr) \le \sigma \Bigl(\bigcup_{j=1}^\infty B(y_j, r^{(n)}_j) \Bigr) \le 2^{-n}. 
\]
Therefore, since   the statements \cond2 and \cond3 of the corollary hold for  the sets $\wt E^1_n$ and the measure $\mu\ti c$ (because, as we discussed above, the sets from Lemma \ref{l:split1} work for measures without atoms), equality \eqref{appr1} implies that these statements hold for the sets $E^1_n$ and the measure $\mu\ti c$ as well. 

So, we can say that  the statements \cond2 and \cond3 of the corollary hold for $f\ti a$ and $f\ti c$ (and the measure $\mu$), and therefore they are true for $f$. 

The statements for the measure $\nu$ can be proved absolutely the same way. 
\end{proof}

\subsection{Uniform boundedness}

\begin{theo}
\label{t:restr-bd1}
Let $\mu$ and $\nu$ be Radon measures in $\R^N$ without common atoms. Assume that a kernel $K\in L^2\ti{loc} (\mu\times \nu)$ is $L^p$ resrictedly bounded, with the restricted norm $C$. Then the integral operator with $T$ kernel $K$ is a bounded operator $L^p(\mu)\to L^p(\nu)$ with the norm at most $2C$.  
\end{theo}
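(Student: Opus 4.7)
The strategy is to transfer the restricted bound into a full bilinear estimate by approximating $f$ and $g$ with functions built on the separated sets $E_n^1, E_n^2$ from Corollary~\ref{c:split2}. The operators $P_n^1, P_n^2$ of that corollary are tailored to this purpose: $P_n^1 f$ is supported in $E_n^1$ and $P_n^2 g$ in $E_n^2$, so the pair has separated supports (allowing the restricted bound to apply); meanwhile $P_n^k\to \tfrac12 I$ weakly, and the $L^p$-norm loses essentially a factor $2^{-1/p}$, which is exactly the scaling one needs given $1/p+1/p'=1$.

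I would first reduce to proving the bound for bounded, compactly supported $f\in L^p(\mu)$ and $g\in L^{p'}(\nu)$; for such $f,g$ the integral $\La Tf, g\Ra := \intl K(s,t) f(t) g(s)\,d\mu(t)d\nu(s)$ is finite because $K\in L^2\ti{loc}(\mu\times\nu)$, and the general case then follows by density. Fix a compact $Q\subset\R^N$ containing both supports. Since $K \in L^2(Q\times Q, \mu\times\nu)$, the restriction of $T$ to $L^2(\mu|_Q)\to L^2(\nu|_Q)$ is Hilbert--Schmidt and therefore compact.

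Applying the restricted $L^p$ bound to $P_n^1 f$ and $P_n^2 g$ gives
\[
|\La T(P_n^1 f), P_n^2 g \Ra| \le C \|P_n^1 f\|_{L^p(\mu)} \|P_n^2 g\|_{L^{p'}(\nu)},
\]
and by (iii) of Corollary~\ref{c:split2} the right-hand side has $\limsup \le C\cdot 2^{-1/p}\cdot 2^{-1/p'}\|f\|_p\|g\|_{p'} = \tfrac{C}{2}\|f\|_p\|g\|_{p'}$. For the left-hand side, statement (ii) provides $P_n^1 f\to \tfrac12 f$ weakly in $L^2(\mu|_Q)$ and $P_n^2 g\to \tfrac12 g$ weakly in $L^2(\nu|_Q)$. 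Compactness of $T$ on $Q$ upgrades the first weak convergence to norm convergence $T(P_n^1 f)\to \tfrac12 Tf$ in $L^2(\nu|_Q)$, and pairing this norm-convergent sequence with the weakly convergent $P_n^2 g$ yields $\La T(P_n^1 f), P_n^2 g\Ra \to \tfrac14 \La Tf, g\Ra$. Combining, $\tfrac14 |\La Tf, g\Ra| \le \tfrac{C}{2}\|f\|_p\|g\|_{p'}$, i.e., $|\La Tf, g\Ra| \le 2C\|f\|_p\|g\|_{p'}$, as required.

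The only real subtlety to watch out for is the treatment of atoms. Replacing $P_n^k$ with the naked indicators $\1_{E_n^k}$ would give the wrong weak limit (since $\1_{E_n^1}$ eventually equals $1$ on the atoms of $\mu$, not $\tfrac12$) and would also fail the sharp $L^p$ inequality (iii). The halving of the atomic part built into $P_n^k$ is precisely what aligns both the weak limit and the $L^p$ estimate, so that the upper bound closes at $2C$. Beyond this, the argument is essentially a standard weak-times-strong compactness trick.
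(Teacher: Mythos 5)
Your proof is correct and follows essentially the same route as the paper's: restrict to a cube so that $T$ is Hilbert--Schmidt (hence compact), test the restricted bound on $P_n^1 f$ and $P_n^2 g$ from Corollary~\ref{c:split2}, and pass to the limit using the weak convergence $P_n^k\to\tfrac12 I$ together with statement (iii) on the norms. Your extra remarks on upgrading weak to norm convergence via compactness and on the role of the atomic halving are accurate elaborations of the same argument.
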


\begin{proof}
Let $f$, $g$ be (Borel measurable) functions, supported on a cube $Q$.   Let us restrict everything to the cube $Q$. Then the integral operator $T$ with kernel $K$ is a Hilbert--Scmidt (and therefore compact) operator, $T: L^2(Q, \mu)\to L^2(Q, \nu)$. 

Let $P^k_n$ be the projections from Corollary \ref{c:split2}. Then by statement \cond2 of the corollary (weak convergence) and because $T$ is compact, 
\[
\lim_{n\to\infty} \La T P^1_n f , P^2_n g \Ra = \frac14 \La T f, g\Ra. 
\]
On the other hand, 
by restricted $L^p$ boundedness
\begin{equation}
\label{r-bound3}
\left| \La  T P^1_n f , P^2_n g \Ra \right| \le C \|  P^1_n f \|_{L^{p}(\mu)} \|  P^2_n g \|_{L^{p'}(\nu)} 
\end{equation}
and by statement \cond3 of Corollary \ref{c:split2} 
\[
\lim_{n\to\infty}
\|  P^1_n f \|_{L^{p}(\mu)} \|  P^2_n g \|_{L^{p'}(\nu)} = 2^{-1/p} \|   f \|_{L^{p}(\mu)}   
2^{-1/p'}\|   g \|_{L^{p'}(\nu)}. 
\]
So, taking limit in both sides of \eqref{r-bound3} we get that 
\[
 \frac14 |\La T f, g\Ra | \le \frac12  \|   f \|_{L^{p}(\mu)} \|   g \|_{L^{p'}(\nu)}.
\]
which is exactly the desired estimate. Since it holds on a dense set of bounded compactly supported functions, we are done. 
\end{proof}

The next result is an easy corollary of the previous theorem. 
\begin{theo}
\label{t:restr-bd2}
Let $\mu$ and $\nu$ be Radon measures in $\R^N$ without common atoms. Assume that a kernel $K\ge 0$ is $L^p$ resrictedly bounded, with the restricted norm $C$. Then the integral operator with $T$ kernel $K$ is a bounded operator $L^p(\mu)\to L^p(\nu)$ with the norm at most $2C$.  
\end{theo}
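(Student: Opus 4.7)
The plan is to reduce to Theorem \ref{t:restr-bd1} by a simple truncation of the kernel that exploits the assumption $K\ge 0$. Set $K_n(s,t) := \min(K(s,t), n)\, \1\ci{B(0,n)\times B(0,n)}(s,t)$, so that each $K_n$ is bounded with compact support (hence in $L^2\ti{loc}(\mu\times \nu)$), satisfies $0\le K_n\le K$, and increases pointwise to $K$.

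The first step is to show that each $K_n$ inherits the restricted $L^p$-bound with the same constant $C$. For bounded $f$, $g$ with separated compact supports,
\[
\left| \int K_n(s,t) f(t) g(s)\, d\mu(t) d\nu(s) \right| \le \int K_n |f||g|\, d\mu d\nu \le \int K |f||g|\, d\mu d\nu \le C\|f\|_{L^p(\mu)}\|g\|_{L^{p'}(\nu)},
\]
the last inequality being the restricted boundedness of $K$ applied to $|f|$ and $|g|$ (which still have separated supports). Theorem \ref{t:restr-bd1} then delivers the integral operator $T_n$ with kernel $K_n$ as a bounded map $L^p(\mu)\to L^p(\nu)$ of norm at most $2C$.

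The second and final step is to pass to the limit. For bounded compactly supported $f, g \ge 0$ (now not required to have separated supports), monotone convergence yields
\[
\int K f g\, d\mu d\nu = \lim_{n\to\infty} \int K_n f g\, d\mu d\nu = \lim_{n\to\infty} \La T_n f, g \Ra_\nu \le 2C \|f\|_{L^p(\mu)}\|g\|_{L^{p'}(\nu)}.
\]
Decomposing a general bounded compactly supported pair $f, g$ into positive and negative parts gives $|\int K f g\, d\mu d\nu|\le \int K|f||g|\,d\mu d\nu \le 2C\|f\|_{L^p(\mu)}\|g\|_{L^{p'}(\nu)}$. Thus the bilinear form $(f, g)\mapsto \int K f g\, d\mu d\nu$ is bounded on a dense subset of $L^p(\mu)\times L^{p'}(\nu)$ with norm at most $2C$ and extends uniquely to a bounded operator $T:L^p(\mu)\to L^p(\nu)$ of norm $\le 2C$.

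There is no serious obstacle in this argument; the non-negativity of $K$ does all the work, supporting both the truncation step (where $K_n\le K$ gives the needed one-sided inequality) and the passage to the limit via monotone convergence. The argument would fail for signed $K$, since there is no reason for $K^+$ or $K^-$ to be restrictedly bounded on its own, and $K_n$ would no longer be controlled pointwise by $K$ in a useful way. The only small point worth emphasising is that once $K_n\in L^2\ti{loc}(\mu\times\nu)$, Theorem \ref{t:restr-bd1} produces a full (not merely restricted) $L^p$ bound, which is precisely what lets the final monotone convergence step apply to pairs $(f,g)$ without separated supports.
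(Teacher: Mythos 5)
Your proposal is correct and follows essentially the same route as the paper: truncate the non-negative kernel by $\min\{K,R\}$, apply Theorem \ref{t:restr-bd1} to the (bounded, hence locally $L^2$) truncations, and pass to the limit using positivity. Your write-up merely fills in details the paper leaves implicit (the inheritance of the restricted bound via $|f|$, $|g|$, and the monotone convergence step); the extra spatial cutoff $\1\ci{B(0,n)\times B(0,n)}$ is harmless but unnecessary, since a bounded kernel is already locally $L^2(\mu\times\nu)$ for Radon measures.
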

\begin{proof}
First, let us notice, that the integral operator with kernel $K\ge0$ is well defined for $f\ge0$, and to compute its norm we only need to test it on $f\ge0$. 

Second, the norm of this operator can be computed as the supremum (or limit as $R\to\infty$) of the operators with truncated kernels
\[
K\ci R(s, t) = \min\{K(s, t), R\}, \qquad R>0.
\]
If $C$ is the restricted norm of $K$, then $C$ is also a restricted bound for all of $K\ci R$. But kernels $K\ci R$ are bounded, so by Theorem \ref{t:restr-bd1} the corresponding integral operators are bounded with the norm at most $2C$. Taking limit as $R\to \infty$ we get the conclusion of the theorem. 
\end{proof}

\subsection{How to treat common atoms}
If the measures $\mu$ and $\nu$ do have common atoms, then the above Theorem \ref{t:restr-bd1} cannot be applied directly. However, using this theorem, it is quite easy to define the boundedness of the singular integral operator in this case. 

Namely, consider the decompositions 
\[
\mu =\wt\mu + \mu_0, \qquad \nu= \wt\nu + \nu_0, 
\]
where $\mu_0$ and $\nu_0$ are the parts of $\mu$ and $\nu$ supported on their common atoms. Then the measures $\mu$ and $\wt \nu$ do not have common atoms; the same is true for $\wt\mu$ and $\mu$. Therefore, to check the $L^p$ boundedness of a singular integral operator with kernel $K$ as an operator $L^p(\mu) \to L^p(\wt\nu)$ or $L^p(\wt\mu)\to L^p(\nu)$. But since the measures do not have common atoms, these operators can be checked using Theorem \ref{t:restr-bd1}. 

So, to check the boundedness of the whole operator, it remains to check the block acting $L^p(\mu_0)\to L^p(\nu_0)$. But the bilinear form of this block is well defined for functions supported at finitely many points (note, that the kernel $K(x, x)$ has to be defined at common atoms of the measures $\mu$ and $\nu$), so there is no problem defining this block.

\section{Uniform boundedness of truncations}

In this section we will show that under some additional assumptions, which are satisfied for the classical operators like generalized  Riesz Transforms (treated as a vector-valued transformation), or Ahlfors--Beurling operator, the restricted $L^p$ boundedness implies the uniform boundedness of the truncated operators $T_\e$, 
\begin{equation}
\label{trunc}
T_\e f (s) = \int_{|s-t|<\e} K(s, t) f(t) d\mu(t). 
\end{equation}

We will need the following definition. 

\begin{defn}
\label{d:sect}
Let $\kappa\in(0,1]$. A function $F$ with values in $\R^d$ is called $\kappa$-sectorial if there exists $x_0\in\R^d$, $|x_0|=1$, such that $\La F(s), x_0\Ra \ge\kappa |F(s)|$ for all $s$ in the domain of $F$. 
\end{defn}

\begin{prop}
\label{p:trunc1}
Let a singular kernel $K$ with values in $\R^k$ be restrictedly bounded in $L^p$ (we assume the measures $\mu$ and $\nu$ in $\R^N$ without common atoms to be fixed). 

Suppose there exists $\delta>0$, $\kappa>0$ and a family of (matrix-valued) Schur multipliers $M_\e(s, t)$, $\e\in(0, \infty)$ with uniformly bounded Schur norm, such that 
\begin{enumerate}
\item for each $\e>0$ the function $M_\e K$ is $\kappa$-sectorial on the set $\{s, t\in \R^N: (1-\delta)\e <|s-t|< \e\}$; 
\item  $| M_\e K| \ge |K|$ on the set   $\{s, t\in \R^N: (1-\delta)\e <|s-t|< \e\}$.  
\end{enumerate}

Then the truncated operators $T_\e$ defined by \eqref{trunc} are uniformly (in $\e$) bounded operators $L^p(\mu)\to L^p(\nu)$. 
\end{prop}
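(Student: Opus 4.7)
The plan is to decompose $T_\e = S_\e + R_\e$, where $S_\e$ is a smooth regularization handled directly by Proposition \ref{p:smm} and the remainder $R_\e$ has kernel supported in a thin annulus lying inside the $\kappa$-sectoriality annulus. The bound on $S_\e$ comes for free; the bound on $R_\e$ will reduce, via the sectoriality and the inequality $|M_\e K|\ge|K|$, to uniform $L^p$-boundedness of a \emph{non-negative} scalar kernel that is uniformly restrictedly bounded, and for that we will invoke Theorem \ref{t:restr-bd2}.

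Concretely, pick $\delta'\in(0,\delta)$ and a smooth $m$ on $\R^N$ with $m\equiv 0$ on $\{|x|\le 1-\delta'\}$ and $m\equiv 1$ on $\{|x|\ge 1\}$ (with, if strict inclusion also at the outer boundary is desired, a small $\eta$-shift so that $m\equiv 1$ only on $\{|x|\ge 1+\eta\}$). Since $1-m\in C_0^\infty(\R^N)$, Proposition \ref{p:smm} (applied componentwise to the $\R^k$-valued $K$) gives uniform $L^p$-boundedness of the operator $S_\e$ with kernel $K(s,t)\,m((t-s)/\e)$. The remainder $R_\e:=T_\e-S_\e$ has kernel $K(s,t)\chi_\e(s,t)$ where
\[
\chi_\e(s,t):=\1\ci{\{|s-t|>\e\}}(s,t)-m((t-s)/\e),
\]
so $|\chi_\e|\le 1$ and $\chi_\e$ is supported in the annulus $A_\e:=\{(1-\delta')\e\le|s-t|\le\e\}$, which lies inside the sectoriality annulus $\{(1-\delta)\e<|s-t|<\e\}$ (modulo the outer-boundary issue handled by the $\eta$-shift).

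On $A_\e$, the two hypotheses on $M_\e K$ combine to yield
\[
|K(s,t)|\;\le\;|M_\e(s,t)K(s,t)|\;\le\;\kappa^{-1}\La M_\e(s,t)K(s,t),x_0(\e)\Ra,
\]
with the right-hand side non-negative. Choose $\psi\in C_0^\infty(\R^N)$, $\psi\ge 0$, with $\psi\equiv 1$ on the annulus $\{(1-\delta')\le|y|\le 1\}$ and $\supp\psi$ strictly inside the corresponding (scaled) sectoriality region. Then $\psi_\e(s,t):=\psi((t-s)/\e)$ is a scalar Schur multiplier with Schur norm independent of $\e$ (Lemma \ref{l:wiener-schur-1} together with the scale invariance of the Wiener algebra remarked after Lemma \ref{l:wiener-schur-2}). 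Define the scalar kernel
\[
\wt K_\e(s,t):=\kappa^{-1}\,\La M_\e(s,t)K(s,t),x_0(\e)\Ra\,\psi_\e(s,t).
\]
By construction $\wt K_\e\ge 0$ everywhere (zero off $\supp\psi_\e$, and non-negative on it by sectoriality), and $\wt K_\e\ge|K|\,\1\ci{A_\e}$ pointwise.

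It remains to check that $\wt K_\e$ is restrictedly $L^p$ bounded with a constant uniform in $\e$. The matrix-valued Schur-multiplier hypothesis on $M_\e$ gives uniform restricted boundedness of the $\R^k$-valued kernel $M_\e K$; pairing with the unit vector $x_0(\e)$ preserves the scalar restricted bound; and multiplication by the scalar Schur multiplier $\psi_\e$ costs only its (uniform) Schur norm. Thus Theorem \ref{t:restr-bd2} applies to the non-negative kernel $\wt K_\e$ and yields a uniform $L^p(\mu)\to L^p(\nu)$ bound. Combined with
\[
|R_\e f(s)|\;\le\;\int_{A_\e}|K(s,t)|\,|f(t)|\,d\mu(t)\;\le\;\int\wt K_\e(s,t)\,|f(t)|\,d\mu(t),
\]
this yields a uniform $L^p$-bound on $R_\e$ and hence on $T_\e=S_\e+R_\e$. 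The main technical obstacle is the construction of $\psi$ in the previous paragraph: it must dominate $\1$ on the unscaled annulus while having support strictly inside the scaled sectoriality region; this is precisely what the buffer $\delta'<\delta$ (together with the small outer $\eta$-adjustment) is there to provide. Once that is in place, the rest is a direct assembly of Proposition \ref{p:smm}, Theorem \ref{t:restr-bd2}, and the Schur-multiplier calculus already developed.
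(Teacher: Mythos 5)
Your argument is correct and follows essentially the same route as the paper's proof: split off the smooth regularization with kernel $K(s,t)\,m((t-s)/\e)$ (handled by the Schur‑multiplier machinery), note that the remainder's kernel lives in the annulus, dominate $|K|\1\ci{A_\e}$ there by $\kappa^{-1}\La M_\e K, x_0\Ra$ via sectoriality, and conclude by the positive‑kernel result (Theorem \ref{t:restr-bd2}) together with the pointwise domination principle. The only (harmless) differences are that you insert the extra cutoff $\psi_\e$ so that the dominating kernel is non‑negative everywhere before invoking Theorem \ref{t:restr-bd2} — a point the paper's proof leaves implicit — and that the outer‑boundary technicality you flag is equally present in the paper, which applies the open‑annulus sectoriality hypothesis to the closed cutoff $\1_{[1-\delta,1]}$.
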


\begin{proof}
Take a function $m\in C^\infty(\R)$ such that $m(x)=1$ for $x\ge 1$ and $m(x) = 0$ for $x\le 1-\delta$. 
Then  the function $\wt m$, $\wt m(s) = m(|s|)$, $s\in \R^N$ satisfies $1-\wt m\in C^\infty_0(\R^N)$. 
As it was discussed in section \ref{sec2} above, this implies that the functions $m(|s-t|/\e) =\wt m((s-t)/\e)$ are Schur multipliers with uniform (in $r$) estimate on the Schur norm. 

Therefore, the smoothly regularized kernels $m(|s-t|/\e) K(s, t)$ give a uniformly bounded family of operators. The difference between the kernel of the truncated operator $T_\e$ and the kernel $m(|s-t|/\e) K(s, t)$ is given by $\psi(|s-t|/\e) K(s, t)$, where $\psi (x) = m(x)-\1_{[1, \infty)}(x)$. Thus, to prove the uniform boundedness of the truncated operators, it is sufficient to prove the uniform boundedness of the operators with kernels $\psi(|s-t|/\e) K(s, t)$. 

We now use the trivial observation, that if $T_1$, $T_2$ are integral operators between $L^p$ spaces (defined initially on dense sets) with kernels $K_1$ and $K_2$ respectively, and if 
$|K_1|\le K_2$, then the boundedness of $T_2$ implies the boundedness of $T_1$ and the estimate $\|T_1\|\le \|T_2\|$.
Therefore, since 
\[
|\psi(|s-t|/\e) K(s, t) | \le \chi(|s-t|/\e) |K(s, t)|
\]
where $\chi := \1_{[1-\delta, 1]}$, to prove the proposition it is sufficient to show that the operators with kernels $\chi(|s-t|/\e) |K(s, t)|$ are uniformly (in $\e$) bounded. 

Let $M_\e$ be the Schur multiplier from the assumption of the proposition, and let $x_0\in\R^d$ be such that 
\begin{equation}
\label{sect1}
\La M_\e(s, t) K(s, t) , x_0\Ra \ge \kappa | K(s, t) | \qquad \forall s, t\in\R^N: \ (1-\delta)\e <|s-t|<\e. 
\end{equation}

The operators with (vector-valued) kernel $M_\e(s, t) K(s, t)$ are uniformly (in $\e$) bounded, and therefore so are the operators with the scalar-valued kernels $\La M_\e(s, t) K(s, t) , x_0\Ra $. 

The estimate \eqref{sect1} implies that 
\[
\chi(|s-t|/\e) |K(s, t)|\le\kappa^{-1} \La M_\e(s, t) K(s, t) , x_0\Ra, 
\]
and thus the operators with kernels $\chi(|s-t|/\e) |K(s, t)|$ are uniformly bounded. 
\end{proof}

\subsection{Some examples}

\begin{ex}
Consider a convolution kernel $K(s, t) =K_1(t-s)$,  
\begin{equation}
\label{eq:c-kern1}
K_1 (x) =A(|x|) B(x/|x|), 
\end{equation}
where $A(r)\ge 0$ for all $r>0$ and $B$ is a function (with values in some $\R^m$) in the Sobolev space $H^k$, $k>N/2$  on the unit sphere $\s_{N-1}$ in $\R^N$. 

If $B(s)\ne 0$ for all $s\in \s_{N-1}$, then the kernel $K$ satisfies the assumption of Proposition \ref{p:trunc1}. 

Indeed, let 
\[
C=\max_{s\in \s_{N-1}} |B(s)|^{-1}, 
\]
and let $\f\in C^\infty_0(\R)$ be such that $\f\equiv 1$ on $[0.9, 1]$, and $\f(x) \equiv 0$ for $x\notin (0.8, 1.1)$. 
Then the function $m$, $m(s) = C \f(s) B^T(s/|s|)$, where $B^T$  stands for the transpose of $B$, is clearly in $H^k(\R^N)$. 

Therefore, by Corollary \ref{cor:Schur-mult}
the functions  $M_r(s, t)$ defined by
\[
M_r (s, t) :=  m((t-s)/r) = C\f(|s-t|/r) B^T((t-s)/|t-s|)
\]
are Schur multipliers  with uniformly bounded Schur norms.  It is trivial to see that the assumptions of \cond1 and \cond2 of Proposition \ref{p:trunc1} are also satisfied (with $\kappa=1$ and $\delta =0.1$).

Thus, for any such kernel, the restricted $L^p$ boundedness implies the uniform boundedness of the truncated operators defined by \eqref{trunc}.

\end{ex}

Examples of such kernels include the kernel of the (vector-valued) generalized Riesz Transform in $\R^N$ ($K_1(s) = s/ |s|^{\alpha+1}$, $s\in\R^N$, $\alpha >0$), or the Cauchy  ($K_1(z) = 1/z= \overline z /|z|^2$) and Ahlfors--Beurling  ($K_1(z) =1/z^2 = \overline z^2/|z|^4$) transforms in the complex plane.

Note that the classical Riesz transform is a particular case ($\alpha = N$) of the generalized one. 

Another example is given by the Beurling--Ahlfors transform $\cS$ on forms, defined, for example in \cite{Iwaniec-Martin_RieszT_1996}. The fact that the kernel of $\cS$ admits the representation \eqref{eq:c-kern1} can be easily seen from formula (112) on p.~53 of \cite{Iwaniec-Martin_RieszT_1996}.

\begin{rem}
Note that Proposition \ref{p:trunc1} cannot be applied to individual (coordinate) generalized Riesz transforms ($K_1(s) = s_k /|s|^{\alpha +1}$, $s=(s_1, s_2, \ldots, s_N)^T$). While the restricted boundedness of such individual  kernel implies the uniform boundedness of its smooth regularizations, we do not know whether it implies the uniform boundedness of the truncations \eqref{eq:c-kern1}. 

We suspect that the answer here is negative.
\end{rem}

\section{Two weight Muckenhoupt condition}

\begin{theo}
\label{t:Muckehnoupt}
Let $K$ be a (vector-valued) convolution kernel in $\R^N$, $K(s, t)= K_1(t-s)$, 
\begin{align}
\label{eq:k-conv2}
    K_1(s) = A(|x|) B(x)
\end{align}
where $B\in H^k(\R^N)=W^{k,2}(\R^N)$, $k>N/2$  is a homogeneous of order $d>0$ function, 
\[
B(cx) = c^d B(x) \qquad \forall x\in\R^N\ \forall c\in \R_+, 
\]
bounded away from $0$ on the unit sphere, 
and $A$ is a function on $\R_+$ such that for some $\alpha\ge d$
\begin{align}
\label{eq:x^{-d-alpha}}
A(x)\ge x^{-d-\alpha}\qquad \forall x\in\R_+. 
\end{align}

Let $\mu$ and $\nu$ be Radon measures without common atoms in $\R^N$, and let the kernel $K$  be  $L^p$ restrictedly bounded (with respect to the measures $\mu$ and $\nu$).

 Then the measures $\mu$ and $\nu$ satisfy the following generalized two-weight Muckenhoupt  $A_p^\alpha$ condition of order $\alpha$;
\begin{equation}
\label{A_p^alpha}
\tag{$A_p^\alpha$}
\sup_{B}\, (\diam B)^{-\alpha} \mu(B)^{1/p'} \nu(B)^{1/p} <\infty;  
\end{equation}
here the supremum is taken over all balls in $\R^N$.
\end{theo}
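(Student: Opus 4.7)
The plan is to construct, for each ball $B = B(z_0,\rho)$, a Schur multiplier $M$ such that $M\cdot K$ is a non-negative scalar kernel with uniformly bounded Schur norm and a pointwise lower bound of order $|s-t|^{-\alpha}$ on all of $B\times B$. Theorem~\ref{t:restr-bd2} will then turn this into a uniform $L^p(\mu)\to L^p(\nu)$ bound for the corresponding positive operator, and testing against $f=g=\1_B$ will extract the $A_p^\alpha$ inequality. The main difficulty is the Schur-multiplier construction itself: the natural angular normalization $B^T(x/|x|)/|B(x/|x|)|$ is direction-dependent at the origin, hence not in the Wiener algebra, so a dyadic-annular construction is forced on us.

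For the single-scale building block, set $\Psi(\omega) := B^T(\omega)/|B(\omega)|$ on $\s^{N-1}$ (well-defined with the same regularity as $B$, since $|B|\ge c_B>0$ on the sphere), pick $\chi\in C_0^\infty(\R)$ with $\supp\chi\subset[1,2]$ and $\chi\ge c_0>0$ on $[1,2]$, and set $m_0(x):=\chi(|x|)\,\Psi(x/|x|)$. Because $\chi$ vanishes near the origin, $m_0\in C_0^\infty(\R^N)$, so Lemmas \ref{l:sob-wiener} and \ref{l:wiener-schur-1} make $m_0$ a Schur multiplier; by scale invariance of the Wiener norm, each dilate $m_r(x):=m_0(x/r)$ is a Schur multiplier with the same norm $\|m_0\|_W$. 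Using $B(x) = |x|^d B(x/|x|)$ and $\Psi\cdot B = |B|$ on $\s^{N-1}$,
\[
m_r(x)\cdot K_1(x) \;=\; \chi(|x|/r)\,A(|x|)\,|x|^d\,|B(x/|x|)|\;\ge\;0,
\]
and on the annulus $|x|\in[r,2r]$ the hypothesis $A(|x|)\ge|x|^{-d-\alpha}$ together with $|B|\ge c_B$ yields the sharper $m_r(x)\cdot K_1(x)\ge c\,|x|^{-\alpha}$.

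To reach pairs at every scale inside $B\times B$, fix any $\e\in(0,\alpha)$ and form the dyadic superposition
\[
M(x) \;:=\; \sum_{k=0}^{\infty} 2^{-k\e}\,m_{2^{-k}\rho}(x).
\]
Scale invariance of the Wiener norm gives $\|M\|_{\mathrm{Schur}}\le\|m_0\|_W/(1-2^{-\e})$ uniformly in $\rho$. The annular supports $[2^{-k}\rho,\,2^{1-k}\rho]$ are essentially disjoint with union $(0,2\rho]$, so on the $k$-th annulus only the $k$-th summand contributes and $M(x)\cdot K_1(x)\ge c'\,2^{k(\alpha-\e)}\rho^{-\alpha}$. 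Because $\mu$ and $\nu$ share no common atoms, the diagonal of $B\times B$ has $\mu\times\nu$-measure zero, so writing $m_k := (\mu\times\nu)\{(s,t)\in B\times B : |s-t|\in[2^{-k}\rho,\,2^{1-k}\rho]\}$ one has $\sum_{k\ge 0} m_k \ge \mu(B)\nu(B)$.

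Since $M\cdot K\ge 0$ is restrictedly $L^p$-bounded uniformly in $\rho$, Theorem~\ref{t:restr-bd2} yields a $\rho$-independent bound $\|\tilde T\|_{L^p(\mu)\to L^p(\nu)}\le C$ for the integral operator $\tilde T$ with kernel $M\cdot K$. Testing against $f=g=\1_B$,
\[
c'\rho^{-\alpha}\mu(B)\nu(B)
\;\le\; c'\rho^{-\alpha}\sum_{k\ge 0} 2^{k(\alpha-\e)}m_k
\;\le\; \langle \tilde T\1_B,\,\1_B\rangle
\;\le\; C\,\mu(B)^{1/p}\nu(B)^{1/p'},
\]
where the first inequality uses $2^{k(\alpha-\e)}\ge1$ for $k\ge 0$ together with $\sum_k m_k\ge\mu(B)\nu(B)$. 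Rearranging gives $\mu(B)^{1/p'}\nu(B)^{1/p}\le C''\rho^\alpha\le C'''(\diam B)^\alpha$, which is exactly the $A_p^\alpha$ condition.
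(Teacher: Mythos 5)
Your argument is correct in outline, and the final testing step (a constant pointwise lower bound for a non-negative Schur-multiplied kernel, Theorem \ref{t:restr-bd2}, then $f=\1\ci\cB$) is exactly the paper's; but your multiplier construction is genuinely different from, and considerably more elaborate than, the one in the paper. The paper simply takes $m(x)=B(x)\f(|x|)$ with $\f\in C^\infty_0(\R)$, $\f\equiv 1$ on $[0,2]$: since $B$ is homogeneous of degree $d>0$ and lies in $H^k$, this $m$ is already an admissible Sobolev multiplier with no need to excise the origin, and homogeneity gives, for $|s-t|\le 2\e$,
\[
m((t-s)/\e)\,K_1(t-s)=\f(|t-s|/\e)\,\e^{-d}\,|B(t-s)|^2\,A(|t-s|)\ \ge\ C^2\e^{-d}|t-s|^{d-\alpha}\ \ge\ C'\e^{-\alpha},
\]
i.e.\ a \emph{constant} lower bound on all of $\cB\times\cB$ at a single scale. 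Your premise that the angular normalization is ``forced'' is thus a red herring: by \emph{not} dividing by $|B|$ one keeps the factor $|t-s|^{2d}$, and together with $A\ge |t-s|^{-d-\alpha}$ and $\alpha\ge d$ that is exactly what makes one scale suffice, eliminating the dyadic superposition entirely. Your route does reach the same conclusion, and the observation that the absence of common atoms lets you discard the diagonal and get $\sum_k m_k\ge\mu(\cB)\nu(\cB)$ is correct; but it costs you the convergence bookkeeping plus two small inaccuracies you should repair: (i) a $C^\infty_0$ function supported in $[1,2]$ cannot be bounded below by $c_0>0$ on $[1,2]$ --- take $\supp\chi\subset[1/2,4]$, $\chi\ge 0$, $\chi\ge c_0$ on $[1,2]$; the resulting overlap of annuli is harmless since every summand is non-negative (for the lower bound, integrate over the disjoint shells $\{2^{-k}\rho\le|s-t|<2^{1-k}\rho\}$); (ii) $m_0$ is not $C^\infty_0$ unless $B$ is smooth --- $B$ is only assumed in $H^k$ --- so you must instead verify $m_0\in H^k(\R^N)$ and invoke Corollary \ref{cor:Schur-mult}; this holds because $H^k$, $k>N/2$, is an algebra and $|B|$ is bounded below on the support of $\chi(|\fdot|)$, but the composition estimate for $1/|B|$ deserves a word.
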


Examples of kernels satisfying assumption of this proposition are generalized vector-valued Riesz transform ($d=1$), Cauchy transform in $\C$ ($\alpha=1$, $d=1$), Beurling--Ahlfors transform in $\C$ ($\alpha=2$, $d=2$).

The \emph{classical} Muckenhoupt condition ($\alpha =N$) is well-known in analysis: for classical (one weight) weighted estimates ($d\mu= w dx$, $d\nu = w^{-1}dx$) it is well-known that the classical Muckenhoupt condition $A_p =(A_p^N)$ ($\alpha =N$) is necessary and sufficient for the $L^p$ boundedness of classical singular integral operators, like Hilbert Transform, or vector Riesz Transform. 

In the case of one measure ($\mu=\nu$) the condition $(A_p^\alpha)$ is independent of $p\in (1, \infty)$ and is equivalent to the growth estimate $\mu(B(x_0, r)) \le Cr^\alpha$ (where $B(x_0, r)$ stands for the open ball of radius $r$ centered at $x_0$) uniformly in $x_0$ and $r$. 

In the one measure case this growth condition for $\alpha=1$ is known to be necessary (but not sufficient) for the boundedness of the Cauchy Transform in $\C$ in $L^2(\mu)$. It was probably well-known to specialists, although we cannot pinpoint the reference, that in the one measure case the growth condition $\mu(B(x_0, r)) \le C r^\alpha$ is necessary for the boundedness of the vector Riesz Transform in $L^p(\mu)$.

This condition also appeared in more general situations as well. For example, it was shown by the second author, see \cite[p.~318]{ProbBook3_v1} that the condition $(A_2^1)$ (in fact, a bit stronger version, where averages over intervals are replaced by Poisson averages) is necessary for the boundedness of the Hilbert Transform in general two weight situation. As easy examples show, the two weight $A_2$ condition is not sufficient for the $L^2$ boundedness of the Hilbert Transform in the general two-weight case. 

It was conjectured for some time that
the stronger ``Poisson'' $A_2$ condition (which is necessary for the two weight estimate of the Hilbert Transform) is also sufficient for the two weight estimate, but this conjecture was later disproved by F.~Nazarov.   

However, the necessity of this condition for general operators as in the above theorem is completely new, and did not appear in the literature. The only exception here is probably  our paper \cite{mypaper}, where it was shown that the condition $(A_2^1)$ is necessary for the $L^2$ boundedness of rather general singular integral operators $T:L^2(\mu)\to L^2(\nu)$ on the real line. 

For example, in the general two weight case, even the necessity of the condition $(A_2^1)$ for the $L^2$ boundedness of the Cauchy Transform was not known (at least it was not presented in the literature). The same can be said for the condition $(A_p^2)$ for the $L^p$ boundedness of the Beurling--Ahlfors Transform in $\C$. The result for the generalized vectors Riesz transforms of order $\alpha$ is also new. 

\begin{proof}[Proof of Theorem \ref{t:Muckehnoupt}]
Define 
\begin{align*}
m(x) := B(x) \f(|x|), \qquad \forall x\in \R^N, 
\end{align*}
where $\f\in C^\infty_0(\R)$ such that $\f\equiv 1$ on $[0,2]$. Clearly $m\in H^k(\R^N)= W^{k,2}(\R^N)$, so by Corollary \ref{cor:Schur-mult} the functions $M_\e$, $M_\e(s, t) := m((t-s)/\e)$ are Schur multipliers with uniformly bounded Schur norms.  

Since
$
B(x) = |x|^d B(x/|x|^d) \ge C |x|^d 
$, 
we conclude that 
\[
|B(x)| \ge C |x|^d
\]
where 
$
C=\inf\{ |B(x)| :x\in \R^N, |x|=1\}>0.
$
Then 
we can estimate
\begin{align*}
B^T(x/\e) B(x) \ge C^2 |x|^{2d} \e^{-d},
\end{align*}
so for $|s-t|\le 2\e$
\begin{align}
\label{eq:K_eps-lower}
K_\e(s, t) := M_\e^T (s, t) K(s, t) \ge C^2 \e^{-d} |t-s|^{d-\alpha} \ge C' \e^{-\alpha};
\end{align}
here in the last inequality we used the fact that $\alpha\ge d$. Note that $K_\e(s, t)\ge 0$ for all $s, t\in \R^N$.

Since $M_\e$ are uniformly bounded Schur multipliers, the operators $T_\e$ with kernel $K_\e$ are uniformly (in $\e$) restrictedly bounded, and so, by
 Theorem \ref{t:restr-bd2} they are uniformly bounded operators $L^p(\mu)\to L^p(\nu)$. 

Let $\cB=\cB(t_0, \e)$ be the open ball of radius $\e$ centered at $t_0$, and let $T_\e$ be the integral operator with kernel $K_\e$. Then estimate \eqref{eq:K_eps-lower} implies that 
\begin{align*}
T_\e \1\ci\cB (s) \ge C' \e^{-\alpha} \mu(\cB)\qquad \forall s\in \cB. 
\end{align*}
Then integrating over $\cB$ we get
\begin{align*}
\|T\1\ci{\cB}\|\ci{L^p(\nu)} \ge C' \e^{-\alpha} \mu(\cB) \nu(\cB)^{1/p}. 
\end{align*}
Since 
\[
\|\1\ci\cB\|\ci{L^p(\mu)} = \mu(\cB)^{1/p}
\]
and the operators $T_\e$ are uniformly bounded (as operators $L^p(\mu)\to L^p(\nu)$), we get the estimate
\begin{align*}
\e^{-\alpha} \mu(\cB) \nu(\cB)^{1/p} \le C \mu(\cB)^{1/p}
\end{align*}
with $C$ independent of $\cB$ and $\e$. But this estimate is equivalent to the conclusion of the proposition
\end{proof}

\section{Concluding remarks}

The main result of this paper  simplifies, even trivializes, the definition of a singular integral operator if only its kernel is given. This paper  does not offer the replacement of the hard analysis technique used to prove the boundedness of singular integral operators; one still has to do hard work of proving the boundedness. However,  it significantly  simplifies the setup. 

For example, in \cite{NTV-2weight-2008} the authors had to spend a lot of time and effort carefully defining their operators. While this was necessary to state the result in all generality, in all interesting situations the operators were abstract singular integral operators, meaning that there was a locally bounded off the diagonal kernel $K(s, t)$ giving the bilinear form for functions with separated compact supports. For example, this approach would work for the so-called dyadic (or Haar) shifts, which recently attracted much attention, see \cite{Lacey-SharpHaarA2-2009, HPTV-A2_2010}. 

But as we had shown in this paper, such operators can be regularized by smooth mollifying multipliers! 
That means that if the operator is restrictedly bounded, then its ``smooth'' regularizations are uniformly bounded, so from the very beginning we can deal with such regularizations. 

It looks a bit ironic, that while the kernels of such dyadic integral operators are very non-smooth, they can be regularized by smooth multipliers. It would be interesting to find regularizations more adapted to the dyadic structure of such operators.

Next, we should mention that since our Schur multipliers are the universal ones, our approach works for operators from $L^p$ to $L^r$, $r\ne p$ as well. We did not want to overload the paper, so we only considered the case $p=r$ in the text. However, the corresponding general statements and their proofs can be   easily obtained from the corresponding parts in the text by obvious modifications. 

We should also mention, that in the classical situation ($d\mu =d\nu = dx$), and even in the ``classical non-homogeneous" (one measure) situation   ($d\mu = d\nu$), it is known that if we have a bounded in some $L^p$ operator $T$ with \cz kernel $K$ (meaning that the bilinear form for functions with separated compact supports is given by $\int K(s, t) f(t) g(s) d\mu(t) d\nu(s)$), then its truncations $T_\e$ are uniformly bounded, and, moreover, the maximal operator $T^\sharp$ is also bounded. 

However, this is a very non-trivial result, requiring rather strong assumptions (\cz kernels, restrictions on the growth of measure, even in the non-homogeneous case). No analogues of our results for more general situations (two measures, no restriction on the growth) were known before: moreover, we suspect that the statement about maximal operator $T^\sharp$ fails in the general situation.


\providecommand{\bysame}{\leavevmode\hbox to3em{\hrulefill}\thinspace}
\providecommand{\MR}{\relax\ifhmode\unskip\space\fi MR }
\providecommand{\MRhref}[2]{%
  \href{http://www.ams.org/mathscinet-getitem?mr=#1}{#2}
}
\providecommand{\href}[2]{#2}

\end{document}